\documentclass{article}
%This is a template for producing LIPIcs articles. 
%See lipics-v2021-authors-guidelines.pdf for further information.
%for A4 paper format use option "a4paper", for US-letter use option "letterpaper"
%for british hyphenation rules use option "UKenglish", for american hyphenation rules use option "USenglish"
%for section-numbered lemmas etc., use "numberwithinsect"
%for enabling cleveref support, use "cleveref"
%for enabling autoref support, use "autoref"
%for anonymousing the authors (e.g. for double-blind review), add "anonymous"
%for enabling thm-restate support, use "thm-restate"
%for enabling a two-column layout for the author/affilation part (only applicable for > 6 authors), use "authorcolumns"
%for producing a PDF according the PDF/A standard, add "pdfa"

%\pdfoutput=1 %uncomment to ensure pdflatex processing (mandatatory e.g. to submit to arXiv)
%\hideLIPIcs  %uncomment to remove references to LIPIcs series (logo, DOI, ...), e.g. when preparing a pre-final version to be uploaded to arXiv or another public repository

%\graphicspath{{./graphics/}}%helpful if your graphic files are in another directory

\usepackage{tikz}
\usepackage{graphicx}
\usepackage{pgfplots}
\usepackage{verbatim}
\usepackage{mathtools}
\usepackage{authblk}
\usepackage{amsmath}
\usepackage{amsthm}
\usepackage{amssymb}
\usepackage{caption}
\usepackage{subcaption}
\usepackage{hyperref}

\newtheorem{theorem}{Theorem}
\newtheorem{lemma}{Lemma}
\newtheorem{corollary}{Corollary}
\newtheorem{proposition}{Proposition}

\theoremstyle{definition}

\pgfplotsset{compat=1.18}

\newcommand*{\AAA}{\mathcal{A}}
\newcommand*{\BB}{\mathcal{B}}
\newcommand*{\MM}{\mathfrak{M}}
\newcommand*{\game}{\mathrm{FS}^\tau}
\newcommand*{\Var}{\mathit{Var}}
\newcommand*{\FO}{\mathrm{FO}}

\title{Description Complexity of Unary Structures in First-Order Logic with Links to Entropy} %TODO Please add

\author{Reijo Jaakkola}
\author{Antti Kuusisto}
\author{Miikka Vilander}
\affil{Tampere University}

\date{}

%%%%%%%%%%%%%%%%%%%%%%%%%%%%%%%%%%%%%%%%%%%%%%%%%%%%%%

\begin{document}

\maketitle

%TODO mandatory: add short abstract of the document
\begin{abstract}
The description complexity of a model is the length of the shortest formula that defines the model. We study the description complexity of unary structures in first-order logic FO, also drawing links to semantic complexity in the form of entropy. The class of unary structures provides, e.g., a simple way to represent tabular Boolean data sets as relational structures. We define structures with FO-formulas that are  strictly linear in the size of the model as opposed to using the naive quadratic ones, and we use arguments based on formula size games to obtain related lower bounds for description complexity. For a typical structure the upper and lower bounds in fact match up to a sublinear term, leading to a precise asymptotic result on the expected description complexity of a randomly selected structure. We then give bounds on the relationship between Shannon entropy and description complexity. We extend this relationship also to Boltzmann entropy by establishing an asymptotic match between the two entropies. Despite the simplicity of unary structures, our arguments require the use of formula size games, Stirling's approximation and Chernoff bounds.
\end{abstract}

\section{Introduction}

This paper investigates the resources needed to 
define finite models with a unary 
relational vocabulary. 
While unary models are very simple, it turns out that proving limits on the formula sizes for defining them is non-trivial. 
Furthermore, unary models are important as they give a direct relational representation of Boolean data sets, consisting simply of data points and their properties---thereby providing one of the simplest data representation schemes available. In practice all tabular data can be discretized and modeled via a Boolean data set. This relates to applications in, e.g., explainability and compression.

%Unary structures are a relational representation of
%structures in the \emph{Boolean data model}, i.e., multisets of propositional assignments. Multisets of propositional assignments constitute one of the simplest data models available, consisting simply of data points with some properties. 

Given a logic $\mathcal{L}$ and a class $\mathcal{M}$ of models, the 
\emph{description complexity} $C(\MM)$ of a model $\MM$ is the minimum length of a 
formula $\varphi\in \mathcal{L}$ that defines $\MM$ with respect to $\mathcal{M}$. In the main scenario of this paper, $\mathcal{M}$ is the class of models with the same 
domain of a finite size $n$ and with the same unary vocabulary $\tau$. 
We mostly study the setting via first-order logic FO. However, as description complexity links to the
themes of \emph{compressibility} and
\emph{compression}, we also investigate the restricted languages $\FO_d$ where the quantifier rank of every formula is limited to a positive integer $d$. This will lead to dramatically shorter description lengths (cf. Section \ref{sec:upperbounds}) via a natural lossy compression phenomenon.

%This data representation corresponds exactly to relational models with unary relations and can be illustrated via Venn diagrams. 

%The main results of the paper concern 
%description complexities of unary models and links between description complexity and Shannon entropy.

We also investigate how the \emph{Shannon entropies} of unary structures are linked to their description complexities, the general trend being that higher entropy relates to higher description complexity. Shannon entropy is a well-known measure of intrinsic complexity, or randomness, from information theory.
The Shannon entropy of a probability distribution $\mathbb{P}: X \rightarrow [0,1]$ over a finite
set $X$ is
given by $- \sum_{x\in X} \mathbb{P}(x) 
\log_2 \mathbb{P}(x).$
A relational structure $\mathfrak{M}$ of size $n$ over a unary vocabulary $\tau$ naturally defines a probability distribution
over its domain. 
Indeed, let $T$ be the set of unary quantifier-free \emph{types} over $\tau$, i.e., subsets of $\tau$. 
%A point $a$ of a model $\mathfrak{M}$ \emph{realizes} a type $\pi$ if for every unary relation symbol $P$, $a$ belongs to the corresponding unary relation $P^\MM$ if and only if $P \in \pi$.
A point $a$ of a model $\mathfrak{\MM}$ \emph{realizes} a type $\pi \subseteq \tau$ if $\pi$ is the set of relation symbols corresponding to exactly those unary relations that contain the point $a$.
% if \(\pi\) consists of precisely those relation symbols for which the corresponding relations contain \(a\).
%If \(a\) belongs to the interpretation of \(P \in \tau\) precisely when \(P \in \pi\).
%Indeed, let $T$ be the set of 
%unary quantifier-free \emph{types} over $\tau$, i.e., 
%conjunctions $\varphi$ such that for each $P\in \tau$, 
%exactly one of the literals $P(x), \neg P(x)$ is a
%conjunct of $\varphi$ (and there are no other
%conjuncts). 
%To ensure there are exactly $2^{|\tau|}$ 
%such types, we also assume some standard ordering of
%conjuncts and bracketing for  types. 
Now 
%, letting $T$ be the set of all types over $\tau$, 
a $\tau$-model $\mathfrak{M}$ of size $n$
naturally defines the probability 
distribution $\mathbb{P}: T \rightarrow [0,1]$ 
such that $\mathbb{P}(\pi) = \frac{|\pi|}{n}$, where $|\pi|$ is the number of points of $\mathfrak{M}$ realizing the
type $\pi$. The Shannon entropy of $\mathfrak{M}$ is then naturally defined to be equal to the Shannon entropy of the distribution $\mathbb{P}:T\rightarrow [0,1]$.

While the Shannon entropy of $\mathfrak{M}$ gives an intrinsic measure of complexity (or randomness) of $\mathfrak{M}$, another entropy
measure may perhaps be easier to grasp intuitively. \emph{Boltzmann entropy} has its origins in statistical mechanics, and it was originally defined as $k\ln \Omega$, where $k$ is the Boltzmann
constant and $\Omega$ the number of \emph{microstates} of a system. In our setting, we follow \cite{stacspaper} and 
define Boltzmann entropy of a \emph{model class} $\mathcal{A}$ as $\log_2 |\mathcal{A}|$, thus dropping the Boltzmann constant $k$, using binary logarithms and associating models with microstates. Now, it is natural to then define the Boltzmann entropy of a model $\mathfrak{M}$ as $\log_2|\mathcal{M}|$,
where $\mathcal{M}$ is the isomorphism class of $\mathfrak{M}$ (recall here that in our setting, all models have the same
domain of size $n$, so $\mathcal{M}$ is finite). The reason why the Boltzmann entropy of $\mathfrak{M}$ is a reasonable measure of intrinsic complexity of $\mathfrak{M}$ is now easy to motivate. Firstly, consider a $\tau$-model $\mathfrak{M}_0$ of
size $n$ where each $P\in \tau$ is interpreted as the empty relation. This is a very simple model whose isomorphism class has size $1$ and the Boltzmann
entropy of $\mathfrak{M}_0$ is thus very low: $\log_2 1 = 0$.
On the other hand, models with the predicates in $\tau$ distributed in more disordered ways have larger isomorphism classes and thus greater Boltzmann entropies.

\begin{comment}
As a summary, the Boltzmann entropy of a model $\mathfrak{M}$ gives a reasonable measure of intrinsic complexity of $\mathfrak{M}$ via the (logarithm) of the size of its isomorphism class. The good news is that the measure can be directly also linked to the Shannon
entropy of $\mathfrak{M}$. Indeed, letting $H_S(\MM)$
and $H_B(\MM)$ denote the Shannon and 
Boltzmann entropies of a model $\mathfrak{M}$, we observe in Section \ref{sec:entropy} the asymptotic equivalence 
$H_S(\MM_n) \sim \frac{1}{n}H_B(\MM_n)$
for \emph{every} sequence $\MM_n$ of models of increasing size $n$. Thus the intuitions of 
Boltzmann entropy as intrinsic randomness
carry over over to the context of
Shannon entropy.

It is well-known that there is a link between entropy and Kolmogorov complexity, linking conceptually to our study. This link has been recently investigated even in the framework of logic and formula length in \cite{stacspaper}.
The general trend is that higher entropy relates to 
increased length of the required 
descriptions. This phenomenon is also one of the
motivations behind the current study. 
For more on these links and 
other related work, see Subsection 
\ref{ssec:relatedwork} below. 

%The main objective of this paper is to investigate description complexities of unary structures, also linking them to entropy. Concerning the languages used, we mainly investigate classical first-order logic $\FO$. 

%The models with multisets of propositional assignments are a very canonical and one of the simplest ways to represent data in general (simply individuals satisfying properties). The logic $\plc$ is a very natural formalism to discuss these models, as the formulas directly reference the numbers of points with some properties.
%The logic $\plc$ is expressively complete for multisets of propositional assignments, which are canonically modeled by the models of $\mathcal{M}$. 
\end{comment}

\subsection{Contributions}

%\subsection{Upper bounds}

Concerning upper bounds on description complexity, we show how to define unary structures via $\FO$-formulas that are linear in model size. This contrasts the standard quadratic formulas that use equalities for counting cardinalities in a naive way. 
We also give analogous formulas for $\FO_d$ with
quantifier rank at most $d$.
%
%
%
%\subsection{Lower bounds}
Concerning lower bounds,   
 we use formula size games to provide bounds with a worst case gap of a constant factor of 2 in relation to the upper bounds. This is done both for full $\FO$ and $\FO_d$.

For a random structure the upper and lower bounds in fact match up to a sublinear additive term. 
Using this, we show that---asymptotically---the expected description complexity of a random unary structure of size $n$ and over the vocabulary $\tau$ is exactly $3n/{2^{|\tau|}}$ .

%\subsection{Results on entropy}
We then turn our attention to entropy. 
 We show a close relationship between the Shannon entropy and Boltzmann entropy of a unary structure. We obtain related upper and lower bounds and thereby also establish the following asymptotic equivalence for \emph{every} sequence $\MM_n$ of models of increasing size $n$: 
$H_S(\MM_n) \sim \frac{1}{n}H_B(\MM_n).$
We note that a result bearing a resemblance to this one has been obtained in a slightly different framework in \cite{Kolmogorov1993}.

Finally, we relate the description complexity of a model to its entropy. We investigate the general picture of the relationship by giving upper and lower bounds on the description complexity of a model in terms of its entropy.
See Figure \ref{fig:entropy} for the case of $\FO$ and Figure \ref{fig:d-entropy} for $\FO_d$. 
The bounds allow us to \emph{exclude} a large portion of the (a priori) possible combinations of description complexity and entropy.
In particular, we see that models with very high entropy have higher description complexity than models with very low entropy.
Moreover, \emph{models with a very low entropy are guaranteed to have a reasonably low description complexity, while models with very high entropies must have a notable description complexity.} 

\subsection{Related work, techniques and applications}\label{ssec:relatedwork}

%\subsubsection{Entropy and Kolmogorov complexity}

\begin{comment}
Description complexity is conceptually related to Kolmogorov complexity \cite{LiV19}. Recall that the Kolmogorov complexity of a binary string $x$ 
%is intuitively speaking the size of the smallest program that outputs $x$. 
can be defined as the size of the smallest program which distinguishes $x$ from all other strings.
%To make the connection between description complexity and Kolmogorov complexity explicit, we note that such a program can be viewed as a formula, that \emph{describes} $x$ in some logic.
Here the program can be viewed as a formula that describes $x$ in some logic. 
%Keeping this in mind, 
The size of the smallest such program is now the description complexity of $x$ in this logic.

The connection between description complexity and Kolmogorov complexity can be found on many levels of expressive power. For example, if the logic that we use for classifying structures is Turing-complete in expressive power, then the description complexity of a structure is very closely related to its Kolmogorov complexity (for an example of a Turing-complete logic, see, e.g., \cite{turingcomp}). On the other hand, the description complexity of weaker logics can be associated with \emph{resource bounded} Kolmogorov complexity, where the Turing machines involved have, for example, bounded runtime or memory \cite{Balcazar1990}.
\end{comment}

%Concerning related work, as already mentioned,
Description complexity is conceptually related to Kolmogorov complexity, and it is also well known that entropy and Kolmogorov complexity are linked. Indeed, for computable distributions, Shannon entropy links to Kolmogorov complexity to within a constant. This is discussed, e.g., in \cite{vitanyi,grunwald,leung}. However, \cite{teixeira} shows that the general link fails for R\'{e}nyi and Tsallis entropies.  See, e.g., \cite{grunwald,leung,teixeira} for discussions on R\'{e}nyi and Tsallis entropies. 
%The first connection between logical \emph{formula length} and entropy has---to our knowledge---been obtained in \cite{stacspaper,stacspaper}, where expected Boltzmann entropy is shown to be asymptotically equivalent to description complexity. 
%There exist well known links between Kolmogorov complexity and Shannon entropy, see for example \cite{vitanyi}. 
%
%
% 
%While there is relatively little previous work in the intersection of logic and entropy, 

Concerning work in the intersection of logic and entropy, 
the
%our
recent article \cite{stacspaper} by Jaakkola et al. provides related
results for a graded
modal logic GMLU over Kripke-models with the universal accessibility relation.
%where counting modalities $\Diamond^{\geq k}$ are interpreted over the universal 
%Kripke relation, i.e., the full relation $W\times W$ when the domain of
%the model considered is $W$. The setting does not allow other binary relations.  
%The authors
%It is shown 
They show that the expected Boltzmann entropy of the equivalence classes of GMLU is asymptotically equivalent to the expected description complexity times the vocabulary size.
While \cite{stacspaper} concerns GMLU, the current paper studies (monadic) $\FO$. Because of the multi-variable nature of $\FO$, this leads to some major differences in the techniques required. The upper bound formulas of the current paper use some clever tricks that are not possible in the modal logic GMLU. Indeed, together with the results of \cite{stacspaper}, our upper bound formulas show that $\FO$ is more succinct than GMLU. Furthermore, the techniques used for the lower bounds for GMLU do not suffice for $\FO$, necessitating new arguments.
%The lower bound proof of the current paper utilizing formula size games also turns out very different for the same reason.
%Very 
Surprisingly, the relationship to entropy also turns out to be different. Indeed, in the case of GMLU, models with maximal entropy have maximal description complexity, while in the case of \(\FO\) this is no longer the case.
%It is also shown that for $d=1$, the greatest equivalence class of $\mathrm{PLC}^d$ has maximum description complexity among the classes. 
%To our knowledge, this is the first connection obtained between  \emph{formula length} and entropy. The current paper builds on those results. 
\begin{comment}
%While there is relatively little previous work in the intersection of logic and entropy, 
Concerning work in the intersection of logic and entropy, 
%the
our
recent article \cite{stacspaper} provides related
results in a scenario where structures are classified via modal logics.
In particular, 
%the authors 
we
study a logic GMLU (for graded 
modal logic with the universal modality) where counting modalities $\Diamond^{\geq k}$ are interpreted over the universal 
Kripke relation, i.e., the full relation $W\times W$ when the domain of
the model considered is $W$. The setting does not allow other binary relations.  
%The authors
We show that the expected  Boltzmann entropy of the equivalence classes of GMLU is asymptotically equivalent to the expected description complexity (with respect to GMLU) times the number of unary relations considered. 
%It is also shown that for $d=1$, the greatest equivalence class of $\mathrm{PLC}^d$ has maximum description complexity among the classes. 
%To our knowledge, this is the first connection obtained between  \emph{formula length} and entropy. The current paper builds on those results. 
\end{comment}

%\subsubsection{Logic and formula size games} 

%Concerning further related work, we will next discuss the proof techniques used in the current paper. 
For proving bounds on formula sizes, we use \emph{formula size games} for $\FO$. Indeed, variants of standard Ehrenfeucht-Fra\"{i}ss\'{e} games would not suffice, as we need to deal with formula length, and thereby with all logical operators, including connectives. The formula size game that we use for $\FO$ is a slight modification of the game of Hella and V\"{a}\"{a}n\"{a}nen  \cite{HellaV15}.
The first formula size game, developed by Razborov in \cite{Razborov90}, dealt with propositional logic. A later variant of the game was defined by Adler and Immerman for $\mathrm{CTL}$ in \cite{AdlerI03}. In \cite{HellaV19} the formula size game for modal logic ML was used by Hella and Vilander to establish that bisimulation invariant FO is non-elementarily more succinct than ML. For a further example, we also mention the frame validity games of Balbiani et al. \cite{BalbianiFHI22}.
Recently, Fagin et al. in \cite{MultiStructuralGames1,MultiStructuralGames2} and Carmosino et al. in \cite{carmosino2024numberquantifiersneededdefine,CarmosinoFIKLS2023,Carmosino2024ParallelPS} have developed and used \emph{multi-structural games} to prove lower bounds on the \emph{number of quantifiers} that are needed for separating two structures in a given logic. In \cite{MultiStructuralGames2} they have also pointed out that strong lower bounds on the number of quantifiers would imply new lower bounds in circuit complexity.

%One should also mention the frame validity games of \cite{BalbianiFHI22}.

%While %finding 
%designing the games for $\plc^d$ is relatively straightforward and based directly on similar earlier systems, it is more challenging to use the game in a way suitable for our purposes.

%In addition to games, we also make use of combinatorial techniques for estimating model class sizes and description complexity. 
%These include
%inter alia, 
%Stirling's approximations and Chernoff bounds. 
%In particular, to obtain our results, we prove \emph{new estimates on $r$-associated Stirling numbers}, which may be of independent interest. 

%\subsubsection{Data compression and explainability}

%While the current paper focuses on theory, the notion of description complexity is relevant in many applications. One practically significant 
Description complexity is relevant in many applications, one
interesting link being data compression. It is natural to consider unary models $\mathfrak{M}$ as data sets to be compressed into corresponding FO-sentences. To give a \emph{simplified} example, let $\mathcal{M}$ be the class of 
models over the unary 
alphabet 
$\tau = \{P,Q\}$ and with domain $M = \{1, \dots , 10\}$. 
Let $\MM_1$ be the model
where $P^{\mathfrak{M}_1} = Q^{\mathfrak{M}_1} = M$
and $\MM_2$ be the model where $P^{\mathfrak{M}_2} = \{1,2,3\}$ and $Q^{\mathfrak{M}_2}$ is,
say, $ \{3,4,5,6,7\}$. 
Now, the simple formula $\forall x (P(x) \land Q(x))$ fully defines the model $\MM_1$ with respect to $\mathcal{M}$, while the model $\mathfrak{M}_2$ clearly requires a more complex formula. 
Suppose then that our models are 
represented as tabular Boolean
data, meaning that each model corresponds 
%in the natural way 
to a 0-1-matrix with ten rows (one row 
for each domain element $m \in M$) and two columns, one column for $P$ and another one for $Q$. In this framework, when using FO as a compression language, the
Boolean matrix for $\mathfrak{M}_1$ then compresses nicely into the
formula $\forall x (P(x) \land Q(x))$, while the 
matrix for $\mathfrak{M}_2$ compresses to a notably more complex formula. 
%
%
%
\begin{comment}
We can see from these simple examples that---as expected---some models can be compressed without loss via very short formulas, while other models require longer formulas to be fully described. In this case the partial description of $\MM_2$ is a form of \emph{lossy compression}. By increasing the counting depth $d$, the information loss decreases and we can ultimately, with a large enough $d$, describe models up to isomorphism. The size of the class of models that satisfy the compressed formula is now a natural measure of the information loss. Our results relate the length of the compressed formula to the size of the information loss and entropy notions. 
\end{comment}

%Perhaps the main reason why there is relatively little work in the intersection of logic and data compression relates to the fact that investigating formula length has mainly been very difficult. However, developments such as formula size games offer some tools to tackle some of the related problems. 

%While compressibility is a generally important ingredient in explainable artificial intelligence (XAI), 
Many of the technical goals in \emph{explainable artificial intelligence} (XAI) relate to compression \cite{sarkar}, often revolving around issues of compressing information given by 
probability distributions. It is natural to expect 
representations of distributions with very high values of Shannon entropy to be more difficult to compress than ones 
with very low values.
Concerning formula length, 
recent articles on XAI using minimum length
formulas of logics as 
explanations of longer specifications
%topic 
include, e.g.,\cite{explainability2,ExplainingMonotoneClassifiers,explainability1, jelia}, and numerous others. For work on using short Boolean formulas as general explanations of real-life data given in the form of unary relational structures (i.e., tabular Boolean data sets), see \cite{jelia}. In that paper, surprisingly short Boolean formulas are shown to give similar error rates to ones obtained by more sophisticated classifiers, e.g., neural
networks and naive Bayesian classifiers. 
%We note that in the framework of \cite{jelia}, explanations are short Boolean formulas that essentially encode information about probability distributions. Using short FO-formulas, while one would lose the simplicity and other benefits of Boolean formulas, it would be possible to more accurately encode infomation about probability distributions as. 
% 

Concerning further directions in 
explainability, 
minimum size descriptions $\psi$ of unary relational models $\mathfrak{M}$ can be useful for
finding explanations in the context of the \emph{special explainability problem} \cite{explainability1}.
The positive case of this problem amounts to finding formulas $\chi$ with a given bound $k$ on length such that $\mathfrak{M}\vDash \chi \vDash \varphi$, where $\varphi$ acts as a classifier. 
In this context, it often suffices to find a short interpolant $\chi$ such that $\psi\vDash  \chi \vDash  \varphi$, where $\psi$ is a minimum description of $\mathfrak{M}$. In applications, this latter task can often be more efficient than the first one, especially when $\psi$ is significantly smaller than $\mathfrak{M}$.
One way to ensure $\psi$ is short enough is to describe $\mathfrak{M}$ in a sufficiently incomplete way, such as with $\FO_d$ with small $d$.

Finally, in applications, it is typically easy to compute the Shannon entropy of structures, while 
description complexity and thereby issues relating to compressibility and explainability are \emph{much more difficult to determine}. Therefore, even a rough picture of the links between entropy and description complexity can be useful.

%Finally, it is worth stressing that there exists relatively little work bridging the gap between logic and  topics in information and coding theory. One of the main reasons for this is indeed that tools for measuring formula length in formal logic are, for the most part, relatively recent. 
%As computational logics offer a formal setting for comparing and classifying different kinds of description languages (i.e., different computational logics), we believe that novel links between the above mentioned fields can be fruitful.

The plan of the paper is as follows. After the preliminaries in Section \ref{sec:preliminaries}, we provide upper bounds for the description complexity of unary structures in Section \ref{sec:upperbounds}. In Section \ref{sec:lower-bounds} we establish related lower bounds using games. In Section \ref{sec:expected-description-complexity} we determine asymptotically the expected description complexity of a random unary structure. In Section \ref{sec:entropy} we give bounds on the relationship between entropy and description complexity. In Section \ref{sec:conclusion} we conclude.
%After the preliminaries in Section \ref{preliminaries}, we prove 
%crucial 
%lower bounds for description complexity in Section \ref{descriptioncomplexitysection} using games. In Section \ref{monotoneconnection} we prove a monotone connection between model class size and description complexity, and in Section \ref{phase} we investigate phase transitions of class size distributions by varying $n$ and $d$. 
%Section \ref{conclusion} concludes the paper.

\section{Preliminaries}\label{sec:preliminaries}

%We begin with basic definitions related to first-order logic.
Let $\tau = \{P_1, \dots, P_k\}$ be a monadic vocabulary and let $\Var = \{x_1, x_2, \dots\}$ be a countably infinite set of variables. The syntax of first-order logic $\FO[\tau]$ is generated by the grammar:
$
\varphi ::= x = y \mid P(x) \mid \neg \varphi \mid \varphi \lor \varphi \mid \varphi \land \varphi \mid \exists x \varphi \mid \forall x \varphi,
$
where $x,y \in \Var$ and $P \in \tau$. 
The \textbf{quantifier rank} of a formula $\varphi \in \FO[\tau]$ is the maximum number of nested quantifiers in the formula. We denote by $\FO_d[\tau]$ the fragment of $\FO[\tau]$ that only includes the formulas with quantifier rank at most $d$.
A formula $\varphi \in \FO[\tau]$ is in \textbf{negation normal form} if negations are only applied to \textbf{atomic formulas} $x=y$ or $P(x)$. We assume all formulas are in negation normal form and treat the notation $\neg \varphi$ as shorthand for the negation normal form formula obtained from $\varphi$ by pushing the negation to the level of atomic formulas.

The \textbf{size} of a formula $\varphi \in \FO[\tau]$ is defined as the number of atomic formulas, conjunctions, disjunctions and quantifiers in $\varphi$. Note that negations do not contribute to the size of $\varphi$. This choice together with using negation normal form means that positive and negative atomic information is treated as equal in terms of formula size. In line with this thinking, we will refer also to $x \neq y$ and $\neg P(x)$ as atomic formulas in the sequel.

A formula $\varphi \in \FO[\tau]$ is in \textbf{prenex normal form} if it is of the form
$Q_1x_1\dots Q_mx_m \psi,$
where $Q_i \in \{\exists, \forall\}$ for $i \in \{1, \dots, m\}$ and $\psi \in \FO[\tau]$ has no quantifiers. It is well-known that every $\FO$-formula can be transformed into an equivalent formula in prenex normal form which has the same size as the original formula.

A \textbf{$\tau$-model} is a tuple $\MM = (M, P_1^\MM, \dots, P_k^\MM)$, where $M = \{1, \dots, n\}$ and $P_i^\MM \subseteq M$ for $i \in \{1, \dots, k\}$. A model $\MM$ is a \textbf{model of size $n$} if $|M| = n$. A partial function $s : \Var \rightharpoonup M$ is called an \textbf{interpretation}. We also call pairs $(\MM, s)$ models and identify the pair $(\MM, \emptyset)$ with the model $\MM$. The truth relation $(\MM, s) \vDash \varphi$ is defined in the usual way for $\FO[\tau]$.

Let $\MM = (M, P_1^\MM, \dots, P_k^\MM)$ be a $\tau$-model of size $n$. We say that a formula $\varphi \in \FO[\tau]$ \textbf{defines} $\MM$ if for all $\tau$-models $\MM'$ of size $n$ we have $(\MM', \emptyset) \vDash \varphi$ iff $\MM'$ is isomorphic to $\MM$. As first-order logic cannot distinguish between isomorphic structures, we can in some sense identify the model $\MM$ with the class of models isomorphic to $\MM$. The \textbf{description complexity} $C(\MM)$ of $\MM$ is the size of the smallest formula in $\FO[\tau]$ that defines $\MM$. 

Note that our definition of description complexity concerns separating $\MM$ only from other models of the same size \(n\). Requiring separation from all other models would unduly emphasize the size of the model, making even very simple models have a high description complexity.
For example, the model $\MM = (M, P^{\MM})$ of size $n$, where \(P^{\MM} = M\), would already require a formula with size in the order of \(n\). In our setting, \(C(\MM) = 2\), because \(\MM\) is defined by the formula \(\forall x P(x)\).

A $\tau$-\textbf{type} $\pi$ is a subset of $\tau$. A point $a \in M$ \textbf{realizes} a $\tau$-type $\pi$ if for all $P \in \tau$ we have $a \in P^{\MM}$ iff $P \in \pi$. We let $|\pi|_\MM$ denote the number of points in $\MM$ realizing $\pi$. We often omit the subscript when the model is clear from the context. Note that two $\tau$-models $\MM$ and $\MM'$ are isomorphic iff each type is realized in the same number of points in both models.

We also consider more coarse ways to divide models into classes than isomorphism. For each positive integer $d$ we can define an equivalence relation $\equiv_d$ over $\tau$-models of size $n$ as follows. Given two $\tau$-models $\MM$ and $\MM'$ of size $n$, we define that $\MM \equiv_d \MM'$ iff for each $\tau$-type $\pi$ with $|\pi|_\MM < d$, we have that $|\pi|_\MM = |\pi|_{\MM'}$. In other words, $\MM \equiv_d \MM'$ iff each type that is realized in less than $d$ points in $\MM$ is realized in the same number of points in both models. It is easy to show that $\MM \equiv_d \MM'$ iff they satisfy the same sentences of $\FO_d[\tau]$. The \textbf{$d$-description complexity} $C_d(\MM)$ of a $\tau$-model $\MM$ is the size of the smallest $\FO_d[\tau]$-formula that defines the equivalence class of $\MM$ in $\equiv_d$.

To characterize model classes, we use tuples with $t = 2^{|\tau|}$ numbers. For an isomorphism class, the tuple is simply $(|\pi_1|, \dots, |\pi_t|).$ For an equivalence class $\mathcal{M}$ of $\equiv_d$, we only use numbers up to $d$. For a tuple $\overline{m} = (m_1, \dots, m_t)$, if $m_i = d$, then there are at least $d$ realizing points of type $\pi_i$ in models of the class $\mathcal{M}$. If $m_i < d$, then each model has exactly $m_i$ points realizing the type $\pi_i$. The notation $\mathcal{M}_{\overline{m}}$ refers to classes of $\equiv_d$ via these tuples. The tuples that correspond to some class of $\equiv_d$ are characterized by the conditions $m_i \leq d$ for $i \in \{1, \dots, t\}$, $\sum_{i = 1}^t m_i \leq n$ and if $\sum_{i = 1}^t m_i < n$, then $m_j = d$ for some $j \in \{1, \dots, t\}$. If $\sum_{i = 1}^t m_i = n$, then $\mathcal{M}_{\overline{m}}$ is an isomorphism class.

Since $\tau$-types partition the points of a $\tau$-model $\MM$, we may consider a natural probability distribution over the types in $\MM$. The probability $p_\pi$ of a type $\pi$ is simply $|\pi|/n$, that is, the probability of hitting a point of type $\pi$ when selecting a point from $\MM$ randomly. The \textbf{Shannon entropy} of $\MM$ is the quantity
%
%\[
%
$
    H_S(\MM) := \sum_{i = 1}^t -p_{\pi_i}\log(p_{\pi_i})= \sum_{i = 1}^t - \frac{|\pi_i|}{n} \log\big(\frac{|\pi_i|}{n}\big).  
$ 
%\]
%
Shannon entropy is an information theoretic way of measuring randomness of probability distributions. Uniform distributions have maximal Shannon entropy, as the uncertainty of the outcome of choosing a random point is maximized. Conversely, for a distribution that places all of the probability mass on a single event, Shannon entropy is zero. Hence, a model realizing each type the same number of times (or as close as possible) has maximal Shannon entropy, while for a model that realizes only a single type Shannon entropy is zero.

Another way to define entropy of a model $\MM$ uses the model class $\MM$ belongs to. Given an equivalence relation $\equiv$ over models of size $n$ (and thus
domain $\{1,\dots , n\}$),
%$n$-element models
 the \textbf{Boltzmann entropy} of $\MM$ with respect to $\equiv$ is
$
    H_B(\MM) := \log(|\mathcal{M}|),
$
where $\mathcal{M}$ is the equivalence class of $\MM$. In this paper the equivalence relation $\equiv$ is either isomorphism in the case of full $\FO$ or $\equiv_d$ for $\FO_d$. For isomorphism, we write $H_B(\MM)$ and for $\equiv_d$ we write $H_B^d(\MM)$.

Boltzmann entropy originates from statistical mechanics, where it measures the randomness of a macrostate (= a model class) via the number of microstates (= models) that correspond to it. The idea is that a larger macrostate is ``more random'' (or ``less specific'') since it is more likely to be hit by a random selection.
We show in Section \ref{sec:entropy} that
$H_S(\MM) \sim \frac{1}{n} H_B(\MM),$
where $n$ is the size of the domain of $\MM$. Thus the two notions of entropy are asymptotically equivalent up to normalization. This shows that both entropies indeed measure the randomness of a model from different points of view.

\section{Upper bound formulas}\label{sec:upperbounds}

In this section we define arbitrary $\tau$-models via formulas of size linear in the size of the model. Recall that defining a model means separating it from all non-isomorphic models with the same domain size. To see why linear size formulas are quite succinct, note that the following naive formula
\[
    \bigwedge_{\ell = 1}^{2^{|\tau|}} \exists x_1 \dots \exists x_{|\pi_\ell|} \bigg(\bigwedge_{i = 1}^{|\pi_\ell|} \pi(x_i) \land  \bigwedge_{j = i + 1}^{|\pi_\ell|} x_i \neq x_j\bigg),
\]
which expresses that for each $1 \leq \ell \leq 2^{|\tau|}$ the type $\pi_\ell$ is realized by at least $|\pi_\ell|$ distinct points, is of quadratic size in the size $n$ of the model.

For clean results on formula size, we define a constant $c_\tau := 15|\tau|2^{|\tau|}$.

\begin{theorem}\label{thm:upper-bound}
    Let $\MM$ be a model of size $n$. Let $T = \{\pi_1, \dots, \pi_\ell\}$ be the types realized in $\MM$, enumerated in ascending order of numbers of realizing points. Now we have 
    \[
        C(\MM) \leq \min(3|\pi_\ell|+c_\tau, 6|\pi_{\ell-1}| + c_\tau).
    \]
\end{theorem}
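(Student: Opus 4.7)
The proof amounts to verifying that the two formulas $\varphi(\MM)$ and $\psi(\MM)$ constructed above actually define the isomorphism class of $\MM$ among $\tau$-models of size $n$, and then combining the size computations already carried out.

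First I would check $\varphi(\MM)$. The subformula $\bigwedge_{i=1}^\ell \exists x \, \pi_i(x) \land \forall x \bigvee_{i=1}^\ell \pi_i(x)$ forces the set of realized types in a model $\MM'$ to be exactly $T$. Given this, I would argue, by the semantic description given right after the definition of $\varphi(T,\overline{m})$, that $\MM' \vDash \varphi(T,\overline{m})$ precisely when each $\pi_i$ (for $i \le r = \ell$) is realized by \emph{at least} $m_i = |\pi_i|$ points in $\MM'$. Since $|M'|=n=\sum_{i=1}^\ell |\pi_i|$, the lower bounds must in fact be attained, so $|\pi_i|_{\MM'}=|\pi_i|_{\MM}$ for each $i$, i.e., $\MM' \cong \MM$. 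The converse direction is immediate: $\MM$ itself satisfies each conjunct.

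Next I would check $\psi(\MM)$. The first two conjuncts again pin down $T$ as the set of realized types. The subformula $\varphi(T,\overline{m}\setminus|\pi_\ell|)$ forces $|\pi_i|_{\MM'} \ge |\pi_i|_{\MM}$ for $i < \ell$, and by the analogous explanation for $\chi$, the conjunct $\chi(T,\overline{m}\setminus|\pi_\ell|)$ forces $|\pi_i|_{\MM'} \le |\pi_i|_{\MM}$ for $i < \ell$. Hence $|\pi_i|_{\MM'} = |\pi_i|_{\MM}$ for all $i < \ell$, and since the total number of points is $n$, this fixes $|\pi_\ell|_{\MM'}$ as well, yielding $\MM' \cong \MM$.

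For the size bound, I would simply invoke the calculations done before the theorem: the size of $\varphi(\MM)$ is at most $3|\pi_\ell|+c_\tau$ and that of $\psi(\MM)$ is at most $6|\pi_{\ell-1}|+c_\tau$ (after absorbing the lower-order terms into $c_\tau = 15|\tau|2^{|\tau|}$, using $|T| \le 2^{|\tau|}$ and $k = |\tau|$). Taking the smaller of these two formulas as a witness for $C(\MM)$ yields the claimed bound.

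The only delicate step is convincing oneself that the auxiliary formulas $\varphi(T,\overline{m})$ and $\chi(T,\overline{m})$ really express ``at least $m_i$ of type $\pi_i$'' and ``at most $m_i$ of type $\pi_i$'' under the standing assumption that only types in $T$ are realized. That is the main obstacle, and I would handle it by induction on the recursion defining $\psi_i$ and $\theta_i$: for $\psi_i$, after universally fixing $x_1,\dots,x_{i-1}$, the existence of a witness $y$ distinct from $x_{i-1}$ and satisfying the disjunction is guaranteed exactly when, for every choice of $i-1$ elements (possibly all of the same type $\pi_j$ with $m_j=i$), a further element of that same type still exists; this precisely encodes the lower bound. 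A symmetric induction for $\theta_i$, exploiting the universal $x_1$ to pick one type at a time and the existential witnesses $x_2,\dots,x_{m_r}$ to enumerate its realizers, establishes the upper bound. Once these semantic claims are in hand, everything else is bookkeeping that has already been done above.
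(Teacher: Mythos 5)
Your proposal is correct and follows essentially the same route as the paper: the paper itself "proves" the theorem simply by exhibiting the formulas $\varphi(\MM)$ and $\psi(\MM)$, explaining informally why $\varphi(T,\overline{m})$ and $\chi(T,\overline{m})$ count types from below and above under the assumption that exactly the types in $T$ are realized, and doing the size bookkeeping you cite. Your additional observations---that the lower bounds in $\varphi(\MM)$ are forced to be attained because they sum to $n$, and that $\psi(\MM)$ recovers $|\pi_\ell|$ from the total---are exactly the (unstated) reasons the constructions work, so nothing is missing.
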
 
\begin{proof}
    %We obtain two different upper bound formulas. Due to lack of space, we only give one of them in full here; see \ref{app:formulas} for details on the second formula.
We begin with an auxiliary formula we use extensively below. For a type $\pi$ and $x \in \Var$, we define 
\[
\pi(x) := \bigwedge\limits_{P \in \pi} P(x) \land \bigwedge\limits_{P \notin \pi} \neg P(x).
\]
The formula $\pi(x)$ states that the point $s(x)$ realizes the type $\pi$.

Let $T = \{\pi_1, \dots, \pi_\ell\}$ be a set of $\tau$-types and let $\overline{m}$ be a sequence of $r \leq \ell$ positive integers with $0 < m_1 \leq  \dots \leq m_r$. Let $\MM$ be a model, where exactly the types in $T$ are realized. We will make sure of this with a separate formula later. The formula $\varphi(T, \overline{m})$ below is satisfied by such a model $\MM$ if and only if for every $i \in \{1, \dots, r\}$, the model $\MM$ has \emph{at least} $m_i$ points that realize the type $\pi_i$. Note that we do not assert anything about the types $\pi_{r+1}, \dots, \pi_\ell$, but we still need to mention them in the formula. We define
\begin{align*}
 &\psi_{m_r} := y \neq x_{m_r-1} \land \bigvee\limits_{\substack{j \in \{1, \dots, r\} \\ m_j = m_r}} (\pi_j(x_1) \land \pi_j(y)) \\
 &\psi_i := y \neq x_{i-1} \land \psi_{i+1} \text{, if $m_j \neq i$ for all $j \in \{1, \dots, r\}$, and} \\
 &\psi_i := y \neq x_{i-1} \land (\bigvee\limits_{\substack{j \in \{1, \dots, r\} \\ m_j = i}} (\pi_j(x_1) \land \pi_j(y)) \lor \psi_{i+1}) \text{, otherwise.} \\
 &\psi_1 := \psi_2 \text{, if $m_j \neq 1$ for all $j \in \{1, \dots, r\}$, and} \\
 &\psi_1 := \bigvee\limits_{\substack{j \in \{1, \dots, r\} \\ m_j = 1}} \pi_j(x_1)  \lor \psi_2 \text{, otherwise.} \\
 &\varphi(T, \overline{m}) := \forall x_1 \dots \forall x_{m_r-1}\exists y (\bigvee\limits_{j \in \{r+1, \dots, \ell\}} \pi_j(x_1) \lor \psi_1)
\end{align*}
We adopt the notation $k = |\tau|$ and compute the size of $\varphi(T, \overline{m})$. The formula has $m_r$ quantifiers. For each type $\pi \in T$, there are at most two occurrences of the subformula $\pi(x)$ (with different variables $x$). Each subformula $\pi(x)$ contains $k$ atomic formulas. Thus there are at most $2k|T|$ atomic formulas of the form $P(x)$ or $\neg P(x)$. Each inequality $y \neq x_i$ for $1 \leq i \leq m_r -1$ occurs exactly once, so there are $m_r-1$ atomic formulas that are equalities or inequalities. Finally we multiply the number of atomic formulas by two and subtract one to also account for the binary connectives. The size of $\varphi(T, \overline{m})$ is thus at most
\[
    m_r +  2(m_r-1 + 2k|T|)-1 
    = 3m_r + 4k|T| - 3. 
\]

We proceed with an explanation of how the formula $\varphi(T, \overline{m})$ works. We assume that precisely the types in $T$ are realized in the model $\MM$ to be evaluated, so we know that the first universal variable $x_1$ is always attached to a point that realizes one of the types in $T$. The formula first checks if $x_1$ realizes one of the types $\pi_{r+1}, \dots, \pi_\ell$ that we wish to ignore. The recursion then handles the rest of the types, starting with the smallest ones. If the type $\pi_j$ of $x_1$ has $m_j = 1$, nothing further is stated as we already know the type is realized in $\MM$ by our assumption. 
%If the type $\pi_j$ of $x_1$ has $m_j = 2$, the formula insists that the existential variable $y$ is attached to a different point than $x_1$ and both points are of the type $\pi_j$. Thus the formula insists that there are at least two points of the type $\pi_j$. 

Now, consider a type $\pi_j$ with, say, $m_j = 5$. Up to the subformula $\psi_5$, the recursion of our formula has insisted that $y \neq x_i$ for $i \in \{1,2,3,4\}$. Note that the formula does not contain any atomic formulas $x_{i_1} \neq x_{i_2}$. The crucial point is that since the variables $x_1, \dots, x_4$ are universally quantified, the existence of $y$ must hold also in the case, where $x_1, \dots x_4$ happen to all be different points of the same type $\pi_j$. If the evaluated model $\MM$ has at least 5 points that realize $\pi_j$, then the formula holds as another point $y$ that realizes $\pi_j$ can be found. If, however, $\MM$ has only 4 points that realize $\pi_j$, then one of the universally quantified tuples includes precisely those 4 points and another $y$ of the same type cannot be found.

Let $T$, $\overline{m}$ and $\MM$ be as above.
We define another formula $\chi(T, \overline{m})$ below. Now the model $\MM$ satisfies $\chi(T, \overline{m})$ if and only if for every $i \in \{1, \dots, r\}$, the model $\MM$ has \emph{at most} $m_i$ points that realize the type $\pi_i$. We again do not assert anything about the types $\pi_j$ with no corresponding $m_j$.
\begin{align*}
    \theta_{m_r} &:= y = x_{m_r} \lor \bigvee\limits_{\substack{j \in \{1, \dots, r\} \\ m_j = m_r}} (\pi_j(x_1) \land \neg \pi_j(y)) \\
    \theta_i &:= y = x_{i} \lor \theta_{i+1} \text{, if $m_j \neq i$ for all $j \in \{1, \dots, r\}$, and } \\
    \theta_i &:= y = x_{i} \lor (\bigvee\limits_{\substack{j \in \{1, \dots, r\} \\ m_j = i}} (\pi_j(x_1) \land \neg \pi_j(y)) \lor (\bigwedge\limits_{\substack{j \in \{1, \dots, r\} \\ m_j = i}} \neg \pi_j(x_1) \land \theta_{i+1}),  \\ &\text{otherwise.} \\
    \chi(T, \overline{m}) &:= \forall x_1 \exists x_2 \dots \exists x_{m_r} \forall y (\bigvee\limits_{j \in \{r+1, \dots, \ell\}} \pi_j(x_1) \lor \theta_1)
\end{align*}
We compute the size of $\chi(T, \overline{m})$. The formula has $m_r +1$ quantifiers. For each type $\pi$, the subformula $\pi(x)$ occurs at most three times and for at least one type with $|\pi| = m_r$, only two times. This results in $3k|T|-k$ atomic formulas of the form $P(x)$ or $\neg P(x)$. For the equalities and inequalities, each equality $y = x_i$ for $1 \leq i \leq m_r$ occurs exactly once, for a total of $m_r$ such atomic formulas. Accounting for the binary connectives, the size of $\chi(T, \overline{m})$ is thus at most
\begin{align*}
    &m_r+1+ 2(m_r+ 3k|T|-k)-1 
    =\  3m_r + 6k|T| -2k.
\end{align*}

We again explain how the above formula works. Note that directly taking the negation of the formula $\varphi(T, \overline{m})$ would not work as we are dealing with all types at once. We instead again start with a universally quantified variable $x_1$ that is attached to a point realizing a type $\pi_j \in T$. We first check if $\pi_j$ is one of the types we can safely ignore. Assume then that $m_j = 5$. The existentially quantified variables $x_2, \dots, x_5$ are then chosen to be of the same type $\pi_j$ as $x_1$ in such a way that every point of the type $\pi_j$ has at least one $x_i$ attached to it. Since $m_j = 5$, the first step of the recursion insists that either $y$ is the same as $x_1$ or the recursion continues. When the recursion arrives at $\theta_5$, we cannot go any further, as to continue, we would need $m_j \neq 5$. We are instead left with the two options of either $y = x_5$ or $y$ realizes a different type than $x_1$. This amounts to saying that there are no more than 5 points that realize the type $\pi_j$. 

The crucial point of the formula $\chi(T, \overline{m})$ is that the first universally quantified variable $x_1$ allows us to use the same existential quantifiers to count all types at once. To ensure that we do not require all of the types to be the same size, we restrict the type realized by $x_1$ before continuing with the recursion.

We proceed to define our complete upper bound formulas that define isomorphism classes of models. Let $\MM$ be a $\tau$-model with domain $M = \{1, \dots, n\}$. Let $T = \{\pi_1, \dots, \pi_\ell\}$ be the set of $\tau$-types realized in $\MM$ and let $\overline{m} = (|\pi_1|, \dots, |\pi_\ell|)$. Assume further that $\overline{m}$ is increasing. We use the formulas $\varphi(T, \overline{m})$ and $\chi(T, \overline{m})$ to separate the model $\MM$ from all other models with the same domain size. The first formula $\varphi(\MM)$ is based on bounding the size of every type in $T$ from below.
\begin{align*}
    \varphi(\MM) := \bigwedge\limits_{i = 1}^\ell \exists x \, \pi_i(x) \land \forall x \bigvee\limits_{i = 1}^\ell \pi_i(x) 
 \land \varphi(T, \overline{m})
\end{align*}
In addition to the size of $\varphi(T, \overline{m})$ computed above, $\varphi(\MM)$ includes $|T|+1$ quantifiers and two occurrences of $\pi(x)$ for each type $\pi$, resulting in $2k|T|$ atomic formulas. Accounting for the added binary connectives, the size of $\varphi(\MM)$ is thus at most
\begin{align*} 
|T|+1+2 \cdot 2k|T|+3|\pi_\ell|+4k|T|-3
= 3|\pi_\ell|+8k|T|+|T|-2. 
\end{align*}

Our second formula $\psi(\MM)$ avoids counting the type $\pi_\ell$ with the most realizing points by bounding the size of all other types from above and from below. For this formula we denote by $\overline{m} \setminus |\pi_\ell|$ the sequence $(|\pi_1|, \dots, |\pi_{\ell-1}|).$ We define
\begin{align*}
    \psi(\MM) := &\bigwedge\limits_{i = 1}^\ell \exists x \, \pi_i(x) \land \forall x \bigvee\limits_{i = 1}^\ell \pi_i(x) \land \varphi(T, \overline{m} \setminus |\pi_\ell|) \land \chi(T, \overline{m} \setminus |\pi_\ell|).
\end{align*}
The numbers of new quantifiers and atomic formulas are the same as for $\varphi(\MM)$. Accounting for the binary connectives, including the one connecting ${\varphi(T, \overline{m} \setminus |\pi_\ell|)}$ and $\chi(T, \overline{m} \setminus |\pi_\ell|)$, the size of $\psi(\MM)$ is now at most
\begin{align*}
&|T|+1+2(k|T|+k|T|)+3|\pi_{\ell-1}|+4k|T|-3+3|\pi_{\ell-1}|+6k|T|-2k+1 \\
=\  &6|\pi_{\ell-1}|+14k|T|+|T|-2k-1.
\end{align*}

For cleaner statements on formula size, we define 
\[
c_\tau := 15|\tau|2^{|\tau|}.
\]
As our main interest is asymptotic behaviour with respect to $n$, we view $c_\tau$ as a constant. We can see from the above that the size of $\varphi(\MM)$ is less than $3|\pi_\ell|+c_\tau$, while the size of $\psi(\MM)$ is less than $6|\pi_{\ell-1}|+c_\tau$.

While both of the above formulas work for any model $\MM$, their size depends on the distribution of types in $\MM$. The minimum of the sizes of $\varphi(\MM)$ and $\psi(\MM)$ gives us the upper bound of the claim for the description complexity of the model $\MM$. 
\begin{comment}
We proceed to define our first complete upper bound formula that defines an isomorphism class of models. Let $\MM$ be a $\tau$-model with domain $M = \{1, \dots, n\}$. Let $T = \{\pi_1, \dots, \pi_\ell\}$ be the set of $\tau$-types realized in $\MM$ and let $\overline{m} = (|\pi_1|, \dots, |\pi_\ell|)$. Assume further that $\overline{m}$ is increasing. 
%We use the formula $\varphi(T, \overline{m})$ to separate the model $\MM$ from all other models with the same domain size. 
The full formula $\varphi(\MM)$ is based on bounding the size of every type in $T$ from below, thus separating it from all non-isomorphic models with the same domain size.
\begin{align*}
    \varphi(\MM) := \bigwedge\limits_{i = 1}^\ell \exists x \, \pi_i(x) \land \forall x \bigvee\limits_{i = 1}^\ell \pi_i(x) 
 \land \varphi(T, \overline{m})
\end{align*}
In addition to the size of $\varphi(T, \overline{m})$ computed above, $\varphi(\MM)$ includes $|T|+1$ quantifiers and two occurrences of $\pi(x)$ for each type $\pi \in T$, resulting in $2k|T|$ atomic formulas. Accounting for the added binary connectives, the size of $\varphi(\MM)$ is thus at most
%
%
% 
\begin{align*} 
%
%
%
|T|+1+2 \cdot 2k|T|+3|\pi_\ell|+4k|T|-3
= 3|\pi_\ell|+8k|T|+|T|-2 \leq 3|\pi_\ell| + c_\tau.
%
%
%
\end{align*}

The second formula $\psi(\MM)$ of size at most $6|\pi_{\ell-1}| + c_\tau$ states that each type $\pi_i$ with $i \neq \ell$ has exactly $|\pi_i|$ points. See \ref{app:formulas} for details. Both formulas define any model $\MM$ so we can always use whichever is smaller, thus proving the claim.
%describe the formulas loosely here, see \ref{app:formulas} for the full formulas and more detailed explanations. The first formula of size at most $3|\pi_\ell|+c_\tau$ states that each type $\pi_i$ has at least $|\pi_i|$ points. The second formula of size at most $6|\pi_{\ell-1}| + c_\tau$ states that each type $\pi_i$ with $i \neq \ell$ has exactly $|\pi_i|$ points. Both formulas use a prefix of universal quantifiers followed by a single existential quantifier to count each type in parallel, resulting in linear formula size. The first universal quantifier is used to identify the type to be counted. The formula is then carefully constructed to use the correct number of other universals together with the single existential to ensure there are enough points of the type. The only inequalities required are those between the single existential and the universals. These can also be reused for all of the types in parallel.
\end{comment}
\end{proof}

%As a corollary, we get an upper bound on the description %complexity of an arbitrary model.

\begin{corollary}\label{cor:maximum-description-complexity}
    Let $\MM$ be a model of size $n$. Now
    $
        C(\MM) \leq 2n + c_\tau.
    $
\end{corollary}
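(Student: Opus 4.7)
The plan is to reduce the corollary to Theorem \ref{thm:upper-bound} via a short case analysis driven by the pigeonhole-type observation that, since the realized types partition the domain of $\MM$, we have $|\pi_{\ell-1}| + |\pi_\ell| \leq n$ whenever at least two types are realized. The point is that the two bounds $3|\pi_\ell|+c_\tau$ and $6|\pi_{\ell-1}|+c_\tau$ from Theorem \ref{thm:upper-bound} complement each other: the first is good when the largest type is not too large, and the second takes over precisely when the largest type dominates and thereby forces the second largest to be small.

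First I would handle the degenerate case $\ell = 1$ separately. Here only a single type $\pi_1$ is realized, so $|\pi_1| = n$, and Theorem \ref{thm:upper-bound} by itself yields only $3n+c_\tau$, which is too weak. But in this case the sentence $\forall x\, \pi_1(x)$ alone defines the isomorphism class of $\MM$ among $\tau$-models of size $n$, and its size depends only on $|\tau|$ and is easily bounded by $c_\tau$. Hence $C(\MM)\leq c_\tau \leq 2n + c_\tau$ in this case.

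For the main case $\ell \geq 2$, I invoke Theorem \ref{thm:upper-bound} and split on whether $|\pi_\ell| \leq 2n/3$ or $|\pi_\ell| > 2n/3$. In the former subcase the first term of the minimum already gives $C(\MM) \leq 3|\pi_\ell| + c_\tau \leq 2n + c_\tau$. In the latter subcase, since the realized types partition the $n$-element domain, we obtain $|\pi_{\ell-1}| \leq n - |\pi_\ell| < n/3$, so the second term yields $C(\MM) \leq 6|\pi_{\ell-1}| + c_\tau < 2n + c_\tau$.

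There is essentially no real obstacle here; the only mildly delicate point is remembering to treat the one-type case separately, since Theorem \ref{thm:upper-bound} refers to $\pi_{\ell-1}$ and a priori leaves it ambiguous how to read the bound when $\ell = 1$. Once this is dispatched by the trivial universal formula, the case split above yields the stated linear bound uniformly.
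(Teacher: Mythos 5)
Your proof is correct and follows essentially the same route as the paper's: both rest on the tradeoff in Theorem \ref{thm:upper-bound} between $3|\pi_\ell|$ and $6|\pi_{\ell-1}|$ under the constraint $|\pi_\ell|+|\pi_{\ell-1}|\leq n$, with the crossover at $|\pi_\ell|=2n/3$ --- the paper phrases this as identifying the worst-case $(2n/3,\,n/3)$ distribution, while you phrase it as the equivalent case split. Your separate treatment of the one-type case $\ell=1$ (where $\pi_{\ell-1}$ is undefined and the first bound alone would only give $3n+c_\tau$) is a sensible precaution that the paper's proof silently skips.
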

\begin{proof}
    A model $\MM$ corresponding to the tuple $(0, \dots, 0, n/3, 2n/3)$ maximises the value of the expression $\min(3|\pi_\ell|+c_\tau, 6|\pi_{\ell-1}| + c_\tau)$, getting the value $2n + c_\tau$.
    %Consider a model $\MM$, where one type has $2n/3$ realizing points, another type has $n/3$ realizing points and the rest of the types are not realized. The expression $\min(3|\pi_\ell|+c_\tau, 6|\pi_{\ell-1}| + c_\tau)$ given in the above theorem evaluates to $2n + c_\tau$ on $\MM$. On the other hand, for any $\MM'$ with a distribution of types different from $\MM$, the largest type is smaller than $2n/3$ or the second largest is smaller than $n/3$, thus lowering the value of the expression.
\end{proof}

We now consider defining equivalence classes of $\equiv_d$. Recall that an equivalence class of $\equiv_d$ corresponds to a tuple $\overline{m} = (m_1, \dots, m_t)$, where $t = 2^{|\tau|}$, $m_i \leq d$ for all $i \in \{1, \dots, t\}$, $\sum_{i = 1}^t m_i \leq n$ and if $\sum_{i = 1}^t m_i < n$, then $m_j = d$ for some $j \in \{1, \dots, t\}$.

\begin{theorem}\label{thm:d-upper-bound}
    Let $\MM$ be a $\tau$-model of size $n$. Let $\mathcal{M}_{\overline{m}}$ be the equivalence class of $\MM$ in $\equiv_d$, where $\overline{m} = (m_1, \dots, m_t)$ is the corresponding tuple with the numbers in ascending order. Let $m_r$ be the highest number in $\overline{m}$ below $d$. Now
    $
        C_d(\MM) \leq 3d+3m_r+c_\tau.
    $
    Additionally, if $m_{t-1} < d$, then
    $
        C_d(\MM) \leq 6m_{t-1}+c_\tau.
    $
\end{theorem}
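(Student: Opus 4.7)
The plan is to verify that the two formulas $\varphi_d(\overline{m})$ and $\psi_d(\overline{m})$ built in the preceding text actually define the $\equiv_d$-equivalence class $\mathcal{M}_{\overline{m}}$ of $\MM$; the size bounds have already been tallied (at most $3d + 3m_r + c_\tau$ and at most $6m_r + c_\tau$ respectively), so the content of the proof lies entirely in the definability claim. First I would state and prove two correctness lemmas for the auxiliary formulas: for any $\tau$-model $\MM'$ in which exactly the types in $T$ are realized, $\MM' \vDash \varphi(T, \overline{m})$ holds iff each $\pi_i$ with $i \leq r$ is realized by at least $m_i$ points, and $\MM' \vDash \chi(T, \overline{m})$ holds iff each such $\pi_i$ is realized by at most $m_i$ points. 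Both are proved by unwinding the recursive definitions of $\psi_i$ and $\theta_i$, formalizing the informal walk-through already given in the text; the key observation for $\varphi(T, \cdot)$ is that the universally quantified block $\forall x_1 \dots \forall x_{m_r - 1}$ can be instantiated with any $m_j - 1$ distinct points realizing $\pi_j$, so the existence of the witness $y$ forces an $m_j$-th realizer.

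Next I would assemble $\varphi_d(\overline{m})$ and verify that it defines $\mathcal{M}_{\overline{m}}$. The two leading conjuncts $\bigwedge_{i=1}^\ell \exists x\, \pi_i(x) \land \forall x \bigvee_{i=1}^\ell \pi_i(x)$ force the set of realized types in $\MM'$ to be exactly $T = \{\pi_1, \dots, \pi_\ell\}$, which is precisely the set of types satisfying $m_i > 0$. By the first correctness lemma, $\varphi(T, (m_1, \dots, m_\ell))$ then forces each $\pi_i$ with $i \leq \ell$ to be realized at least $m_i$ times (recall $m_i = d$ for $r < i \leq \ell$), and by the second lemma, $\chi(T, (m_1, \dots, m_r))$ forces the first $r$ types to be realized at most $m_i$ times. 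Combined, this says the first $r$ types are hit exactly $m_i$ times and the next $\ell - r$ types at least $d$ times, which is exactly membership in $\mathcal{M}_{\overline{m}}$. This yields the first inequality.

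For the second inequality I would restrict to the hypothesis $m_{t-1} < d$, which means at most one entry of $\overline{m}$ equals $d$. The leading conjuncts of $\psi_d(\overline{m})$ again pin the set of realized types to $T$, and $\varphi(T,(m_1,\dots,m_r)) \land \chi(T,(m_1,\dots,m_r))$ fixes the counts of $\pi_1, \dots, \pi_r$ exactly at $m_1, \dots, m_r$. The key accounting step is that since $|M| = n$ is fixed, the remaining $n - \sum_{i=1}^r m_i$ elements must realize the remaining realized types. If every $m_i$ is below $d$ then $\ell = r$ and the isomorphism class is already determined; if exactly one $m_i$ equals $d$, then there is a unique remaining realized type, which must absorb all $n - \sum_{i \leq r} m_i \geq d$ leftover elements, and again membership in $\mathcal{M}_{\overline{m}}$ is precisely captured. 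Since $r = t-1$ in the relevant case, the size bound $6m_{t-1} + c_\tau$ follows from the earlier calculation.

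I expect the main obstacle to be the induction establishing the correctness of $\varphi(T, \overline{m})$ and $\chi(T, \overline{m})$: one must carefully track, at each step of the recursion $\psi_i$ or $\theta_i$, which values of the type of $x_1$ have already been excluded and which are still active, and verify that the inequalities $y \neq x_{i-1}$ (respectively $y = x_i$) interact correctly with the universal or existential quantification outside. Once those lemmas are in hand, the rest of the argument is bookkeeping: checking that the leading conjuncts pin down the set of realized types and that the size-$n$ constraint converts the ``at least $d$'' clauses into exact counts in the $\psi_d$ case.
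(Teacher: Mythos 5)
Your proposal follows exactly the paper's route: the theorem is obtained by constructing the formulas $\varphi_d(\overline{m})$ and $\psi_d(\overline{m})$ from the auxiliary counting formulas $\varphi(T,\cdot)$ and $\chi(T,\cdot)$ and tallying their sizes, and the paper itself states the theorem as a summary of that construction without a separate formal correctness proof. Your added correctness lemmas and the accounting step showing that, when at most one entry equals $d$, fixing the counts of the smaller types exactly pins down the class are precisely the details the paper leaves implicit, so the proposal is correct and essentially the same argument.
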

\begin{proof}
    Let $\overline{m} = (m_1, \dots, m_t)$ be a tuple corresponding to a class of $\equiv_d$, ordered in the following way. The first numbers $m_1, \dots, m_r$ are the ones greater than $0$ and smaller than $d$ in ascending order. The numbers $m_{r+1}, \dots, m_\ell$ are all equal to $d$, and finally the numbers $m_{\ell+1}, \dots, m_t$ are all equal to 0. 

Using this order for the types, the set $T = \{\pi_1, \dots, \pi_\ell\}$ is now the set of types realized in models of the class and the first $r$ types are each realized exactly $m_i < d$ times. This is in line with the notation of the formulas for full $\FO$ above.

We utilize the subformulas $\varphi(T, \overline{m})$ and $\chi(T, \overline{m})$ defined in the proof of Theorem \ref{thm:upper-bound}. Our first formula works for any $\overline{m}$. The formula states that each type $\pi_j$ is realized at least $m_j$ times and furthermore, the ones with $m_j < d$ are realized at most $m_j$ times.
\begin{align*}
    \varphi_d(\overline{m}) := &\bigwedge\limits_{i = 1}^\ell \exists x \, \pi_i(x) \land \forall x \bigvee\limits_{i = 1}^\ell \pi_i(x) \land \varphi(T, (m_1, \dots, m_\ell)) \land \chi(T, (m_1, \dots, m_r))
\end{align*}
In the same way as for $\psi(\MM)$ in the proof of Theorem \ref{thm:upper-bound}, the size of $\varphi_d(\overline{m})$ is at most
\begin{align*}
&|T|+1+2(k|T|+k|T|)+3d+4k|T|-3+3m_r+6k|T|-2k+1 \\
=\  &3d+3m_r+14k|T|+|T|-2k-1 \leq 3d+3m_r+c_\tau.
\end{align*} 

Our second formula is only for the special case, where there is exactly one $m_j$ equal to $d$. In this case, as with full $\FO$, we can avoid counting the type with the most realizing points. The rest of the types $\pi_j$ have $m_j < d$ and the formula states that each $\pi_j$ is realized at least and at most $m_j$ times. 
\begin{align*}
    \psi_d(\overline{m}) := &\bigwedge\limits_{i = 1}^\ell \exists x \, \pi_i(x) \land \forall x \bigvee\limits_{i = 1}^\ell \pi_i(x) \land \varphi(T, (m_1, \dots, m_r)) \land \chi(T, (m_1, \dots, m_r))
\end{align*}
Again in the same way as for $\psi(\MM)$ in the proof of Theorem \ref{thm:upper-bound}, the size of $\psi_d(\overline{m})$ is at most
\begin{align*}
&|T|+1+2(k|T|+k|T|)+3m_r+4k|T|-3+3m_r+6k|T|-2k+1 \\
=\  &6m_r+14k|T|+|T|-2k-1 \leq 6m_r+c_\tau.
\end{align*} 

The upper bounds of the claim follow.
    %We use the same subformulas from Theorem \ref{thm:upper-bound} to obtain two linear size formulas. See \ref{app:d-formulas} for the full formulas. The first formula of size $3d+3m_r+c_\tau$ works for any tuple $\overline{m}$ and states that each type $\pi_i$ has exactly $m_i$ points if $m_i < d$ and at least $d$ points if $m_i = d$. The second formula of size $6m_{t-1}+c_\tau$ states that each type $\pi_i$ with $i \neq t$ has exactly $m_i$ points and works only if all types except possibly $\pi_t$ have less than $d$ points. 
\end{proof}

Note that since $m_r < d$, we have $6m_r < 3d + 3m_r$ so the bound for the special case is tighter than the general one. While we must use the more general bound for any $\overline{m}$ with at least two instances of $d$, the tighter bound is significantly better for small classes with only one instance of $d$ in their tuple. For example, the class with the tuple $(0, \dots, 0, 1, d)$ gets an upper bound of $6+c_\tau$ regardless of the number $d$. At the other extreme, the class with the tuple $(0, \dots, 0, d-1, d, d)$ gets an upper bound of $3d + 3(d-1) + c_\tau = 6d - 3 + c_\tau$.

We again directly obtain a global upper bound on description complexity.

\begin{corollary}
    Let $\MM$ be a $\tau$-model of size $n$. Now
    $
        C_d(\MM) \leq 6d-3+c_\tau.
    $
\end{corollary}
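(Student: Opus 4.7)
The plan is to invoke Theorem \ref{thm:d-upper-bound} and then bound the parameter $m_r$ crudely by its maximum allowed value $d-1$. Concretely, given any $\tau$-model $\MM$ of size $n$, let $\overline{m} = (m_1,\dots,m_t)$ be the tuple characterizing the $\equiv_d$-class of $\MM$, arranged in ascending order, and let $m_r$ denote the largest coordinate of $\overline{m}$ that is strictly smaller than $d$. The theorem gives $C_d(\MM) \leq 3d + 3m_r + c_\tau$.

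Next, I would simply observe that by definition $m_r < d$, hence $m_r \leq d-1$, so that
\[
 3d + 3m_r + c_\tau \;\leq\; 3d + 3(d-1) + c_\tau \;=\; 6d - 3 + c_\tau,
\]
which is the claimed bound.

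The only thing worth pausing over is the degenerate case in which every realized type is in fact realized at least $d$ times, so that no coordinate of $\overline{m}$ lies strictly between $0$ and $d$ and $m_r$ does not literally exist. In that situation the $\chi$-conjunct in $\varphi_d(\overline{m})$ is vacuous and the size analysis of $\varphi_d(\overline{m})$ collapses to at most $3d + c_\tau$, which is well below $6d-3+c_\tau$; equivalently, one can just set $m_r := 0$ in the formula of Theorem \ref{thm:d-upper-bound}. Either reading makes the inequality go through. There is no real obstacle here: the corollary is a one-line specialization of Theorem \ref{thm:d-upper-bound} obtained by using the trivial bound $m_r \leq d-1$, and (as a sanity check) the sharper alternative bound $6m_{t-1} + c_\tau$ available when $m_{t-1} < d$ is also dominated by $6d-3+c_\tau$, so we never need it.
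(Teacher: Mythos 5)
Your proof is correct and matches the paper's (implicit) argument exactly: the corollary is obtained by substituting the trivial bound $m_r \leq d-1$ into the estimate $C_d(\MM) \leq 3d + 3m_r + c_\tau$ of Theorem \ref{thm:d-upper-bound}, and your handling of the degenerate case where no $m_i$ lies strictly between $0$ and $d$ agrees with the paper's own remark that such classes get the even better bound $3d + c_\tau$.
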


\section{Lower bounds via formula size games}\label{sec:lower-bounds}

In this section, we show lower bounds that match the upper bounds of Section \ref{sec:upperbounds} up to a factor of 2. We use the formula size game for first-order logic defined in \cite{HellaV15}.  We modify the game slightly to correspond to formulas in prenex normal form as this form does not affect the size of the formula. In addition, we introduce a second resource parameter $q$ that corresponds to the number of quantifiers in the separating formula. The game consists of two phases: a quantifier phase, where only $\exists$-moves and $\forall$-moves can be made by S, and an atomic phase, where only $\lor$-moves, $\land$-moves and atomic moves can be made. Before the definition of the game, we define some notation.

Let $\AAA$ be a set of $\tau$-models and let $\varphi \in \FO[\tau]$. We denote $\AAA \vDash \varphi$ to mean $(\MM, s) \vDash \varphi$ for all $(\MM, s) \in \AAA$. Similarly, we denote $\AAA \vDash \neg \varphi$ to mean $(\MM,s) \nvDash \varphi$ for all $(\MM, s) \in \AAA$.

For an interpretation $s$, a point $a \in M$ and a variable $x \in \Var$, we denote by $s[a/x]$ the interpretation $s'$ such that $s'(x) = a$ and $s'(y) = s(y)$ for all $y \in \mathrm{dom}(s)$, $y \neq x$. Let $\AAA$ be a set of $\tau$-models with the same domain $M$ and let $f: \AAA \to M$ be a function. We denote 
by
$\AAA[f/x]$ the set $\{(\MM, s[f(\MM, s)/x]) \mid (\MM, s) \in \AAA\}.$
Intuitively, the function $f$ gives the new interpretation of the variable $x$ for each model $(\MM, s) \in \AAA$. Additionally, we denote 
$
\AAA[M/x] := \{(\MM, s[a/x]) \mid (\MM, s) \in \AAA,\  a \in M\}.
$
Here the variable $x$ is given all possible interpretations, usually leading to a larger set of models. We next define the game.

Let $\AAA_0$ and $\BB_0$ be sets of $\tau$-models and let $r_0, q_0 \in \mathbb{N}$ with $r_0 > q_0$. The FO \textbf{prenex formula size game} $\game(r_0, q_0, \AAA_0, \BB_0)$ has two players: Samson (S) and Delilah (D). Positions of the game are of the form $(r, q, \AAA, \BB)$, where $r,q \in \mathbb{N}$ and $\AAA$ and $\BB$ are sets of $\tau$-models. The starting position is $(r_0, q_0, \AAA_0, \BB_0)$. In a position $(r, q, \AAA, \BB)$, if $r = 0$, then the game ends and D wins. Otherwise, if $q > 0$, the game is said to be in the \textbf{quantifier phase} and S can choose from the following three moves:
\begin{itemize}
    \item \textbf{$\exists$-move:} S chooses $f : \AAA \to M$ and $x_i \in \Var$. The new position is \\ $(r-1, q-1, \AAA[f/x_i], \BB[M/x_i])$. 
    \item \textbf{$\forall$-move:} The same as the $\exists$-move with the roles of $\AAA$ and $\BB$ switched.
    \item \textbf{Phase change:} S moves on to the atomic phase and the new position is $(r, 0, \AAA, \BB)$.
\end{itemize}
In a position $(r, q, \AAA, \BB)$, if $q = 0$, the game is said to be in the \textbf{atomic phase} and S can choose from the following three moves:
\begin{itemize}
    \item \textbf{$\land$-move:} S chooses $r_1, r_2 \in \mathbb{N}$ and $\BB_1, \BB_2 \subseteq \BB$ such that $r_1 + r_2 + 1 = r$ and $\BB_1 \cup \BB_2 = \BB$. Then D chooses the next position from the options $(r_1, 0, \AAA, \BB_1)$ and $(r_2, 0, \AAA, \BB_2)$.
    \item \textbf{$\lor$-move:} The same as the $\land$-move with the roles of $\AAA$ and $\BB$ switched.
    \item \textbf{Atomic move:} S chooses an atomic formula $\alpha$. The game ends. If $\AAA \vDash \alpha$ and $\BB \vDash \neg \alpha$, then S wins. Otherwise, D wins. 
\end{itemize}

The prenex formula size game characterizes separation of model classes with formulas of limited size in the following way.

\begin{theorem}\label{thm:gameworks}
    Let $\AAA_0$ and $\BB_0$ be sets of $\tau$-models and let $r_0, q_0 \in \mathbb{N}$ with $r_0 > q_0$. The following are equivalent
    \begin{enumerate}
        \item S has a winning strategy in the game $\game(r_0, q_0, \AAA_0, \BB_0)$,
        \item there is an $\FO[\tau]$-formula $\varphi$ in prenex normal form with size at most $r_0$ and at most $q_0$ quantifiers such that $\AAA_0 \vDash \varphi$ and $\BB_0 \vDash \neg \varphi$,
        \item there is an $\FO[\tau]$-formula $\varphi$ with size at most $r_0$ and at most $q_0$ quantifiers such that $\AAA_0 \vDash \varphi$ and $\BB_0 \vDash \neg \varphi$.
    \end{enumerate}
\end{theorem}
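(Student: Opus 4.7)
The plan is to establish the implications cyclically. The implication $(2) \Rightarrow (3)$ is immediate since every prenex formula is an $\FO$-formula. For $(3) \Rightarrow (2)$, I would invoke the fact recalled in the preliminaries that every $\FO$-formula has an equivalent prenex form of the same size (and therefore the same number of quantifiers). The nontrivial content lies in the equivalence $(1) \Leftrightarrow (2)$, which will be proved by a double induction that matches the two-phase structure of the game to the two-layer structure of a prenex formula.

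For $(2) \Rightarrow (1)$, I would induct on the structure of a separating prenex formula $\varphi = Q_1 x_{i_1} \cdots Q_m x_{i_m} \psi$ of size at most $r_0$ with $m \leq q_0$. The strategy for S in the quantifier phase reads off the prefix: for each $Q_j = \exists$, S picks a function $f : \AAA \to M$ by selecting, for each $(\MM, s) \in \AAA$, an element witnessing the remaining subformula, and the $\forall$-case is symmetric on the $\BB$-side. After $m$ quantifier moves, S phase-changes into a position in which the $\AAA$-side satisfies the quantifier-free matrix $\psi$ and the $\BB$-side refutes it. A second induction on $\psi$ then mirrors its Boolean structure: a top-level conjunction $\psi = \psi_1 \land \psi_2$ corresponds to an $\land$-move splitting $\BB$ into the models refuting $\psi_1$ and those refuting $\psi_2$, with the resource split $r_1 + r_2 + 1 = r$ reflecting $\size(\psi_1) + \size(\psi_2) + 1 = \size(\psi)$; disjunctions are handled symmetrically; and an atomic $\psi$ yields an atomic move.

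For $(1) \Rightarrow (2)$, I would induct on the pair $(r_0, q_0)$ and read a separating prenex formula off a winning strategy for S. A quantifier-phase move prepends a single quantifier to the formula obtained by the induction hypothesis applied at the successor position, whose resources strictly decrease. A phase-change move triggers a sub-induction on $r_0$ in the atomic phase: $\land$- and $\lor$-moves produce conjunctions and disjunctions whose sizes are additively split along the partition of $\BB$ or $\AAA$, and atomic moves produce atomic formulas of size one. This is essentially the argument of \cite{HellaV15} restricted to the quantifier-free fragment, then prepended with the prefix of quantifier moves collected in the first phase.

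The main obstacle will be ensuring that the two game phases correspond exactly to the two layers of a prenex formula, so that the reconstructed formula is genuinely prenex and not merely equivalent to one; the fact that the game forbids quantifier moves after the phase change is exactly what enforces this. The constraint $r_0 > q_0$ reflects that any prenex formula needs at least one atomic formula beyond its quantifier prefix, and it must be maintained throughout the induction: after $m$ quantifier moves the remaining budget is $r_0 - m > q_0 - m \geq 0$, leaving room for a nonempty atomic phase. Beyond this bookkeeping, the argument is a routine adaptation of the standard formula size game correspondence to the prenex setting enriched with a quantifier-count parameter.
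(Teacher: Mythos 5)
Your proposal is correct and follows essentially the same route as the paper, which simply cites the inductive argument of Hella and V\"a\"an\"anen and notes that the prenex restriction and the separate quantifier counter do not change it, while handling $(2)\Leftrightarrow(3)$ by the size-preserving prenex normal form transformation exactly as you do. Your write-up merely makes explicit the double induction that the paper leaves to the reference.
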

\begin{proof}
    For the simple inductive proof on how the game works, see \cite{HellaV15}. The slight modifications of the separate parameter $q$ for quantifiers and prenex normal form do not change the proof in any meaningful way so we omit it.  For the equivalence between the second and third item, note that transforming a formula into prenex form and renaming variables as needed, does not increase its size in full FO with no restrictions on, say, the number of variables.
\end{proof}

\begin{comment}
Let $\MM$ be a $\tau$-model with domain $M = \{1, \dots, n\}$. Let $\pi_1$ and $\pi_2$ be two types realized in $\MM$ such that $|\pi_2| \geq |\pi_1|$. We use the formula size game to show a lower bound of the order $3|\pi_1|$ for the description complexity of $\MM$.

Let $\MM'$ be the model obtained from $\MM$ by changing the type of one point from $\pi_1$ to $\pi_2$. We define $\AAA_0 = \{(\MM, \emptyset)\}$ and $\BB_0 = \{(\MM', \emptyset)\}$. For convenience, we denote $m = |\pi_1|_\MM$ below. We will show that separating the sets $\AAA_0$ and $\BB_0$ requires a formula of size at least $3m-3$.
\end{comment}

We take a moment to build some intuition on the formula size game. The role of player S is to show that the model sets $\AAA_0$ and $\BB_0$ can be separated by some $\FO$ formula with restrictions on size and number of quantifiers. To achieve this, S starts building the supposedly separating formula, starting from the quantifiers. 

Each move of the game corresponds to an operator or atomic formula. When making a move, S makes choices for each model that reflect how that particular model is going to satisfy the formula, in the case of models in $\AAA$, or not satisfy it, in the case of models in $\BB$. For example, for an $\exists$-move, S must choose for each model in $\AAA$ the point to quantify. This is done via the function $f$. For a $\land$-move, S chooses for each model in $\BB$ one of the conjuncts, asserting that the model will not satisfy that conjunct. 

The resources $r_0$ and $q_0$ restrict the moves of S. He can only make at most $q_0$ quantifier moves in the quantifier phase of the game. The resource $r_0$ limits the size of the entire separating formula, including the quantifiers. In the atomic phase, for $\land$-moves, S must divide the remaining resource $r$ between the two conjuncts. It is then the role of D to choose the conjunct she thinks cannot be completed in such a way that the models present are separated. Once D has chosen a conjunct, the other conjunct not chosen is discarded for the rest of the game. Thus, the entire separating formula need not be constructed during the game.

We move on to our lower bounds. Let $\MM$ be a $\tau$-model with domain $M = \{1, \dots, n\}$ and let $T = \{\pi_1, \dots, \pi_\ell\}$ be the types realized in $\MM$, enumerated in ascending order of numbers of realizing points, like in the previous section. We use the formula size game to show a lower bound of the order $3|\pi_{\ell-1}|$ for the description complexity of $\MM$.

Let $\MM'$ be the model obtained from $\MM$ by changing the type of one point from $\pi_{\ell-1}$ to $\pi_\ell$. We define $\AAA_0 = \{(\MM, \emptyset)\}$ and $\BB_0 = \{(\MM', \emptyset)\}$. 
%For convenience, we denote $m = |\pi_{\ell-1}|$ below. 
We will show that separating the sets $\AAA_0$ and $\BB_0$ requires a formula of size at least $3|\pi_{\ell-1}|-3$.
We begin with an easy lemma on the number of quantifiers required to separate $\AAA_0$ from $\BB_0$. 
\begin{lemma}\label{lem:quantifiers}
    If $\varphi$ separates $\AAA_0$ from $\BB_0$, then $\varphi$ has at least $|\pi_{\ell-1}|$ quantifiers.
\end{lemma}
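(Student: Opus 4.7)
The plan is to apply the characterization of $\equiv_d$-equivalence recorded in the preliminaries: two $\tau$-models are $\equiv_d$-equivalent precisely when they satisfy the same sentences of $\FO_d[\tau]$. First I would record the structural difference between $\MM$ and $\MM'$: they differ only in the realizer counts of two types, namely $\pi_{\ell-1}$ (which drops from $m$ in $\MM$ to $m-1$ in $\MM'$) and $\pi_\ell$ (which rises from $|\pi_\ell|_\MM$ to $|\pi_\ell|_\MM + 1$); every other type has identical realizer counts in both models.

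Next I would verify that $\MM \equiv_{m-1} \MM'$. For the threshold $d = m-1$, the defining condition of $\equiv_d$ is triggered only by types realized fewer than $d$ times in the two models. Because the enumeration is in ascending order, we have $|\pi_\ell|_\MM \geq |\pi_{\ell-1}|_\MM = m$, so both $\pi_\ell$ and $\pi_{\ell-1}$ are realized at least $m-1$ times in each of $\MM$ and $\MM'$, and thus neither imposes any constraint at threshold $m-1$. The remaining types have unchanged realizer counts, so the condition holds trivially.

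From the $\FO_{m-1}$-characterization of $\equiv_{m-1}$, no formula of quantifier rank at most $m-1$ can separate $\AAA_0 = \{(\MM,\emptyset)\}$ from $\BB_0 = \{(\MM',\emptyset)\}$. Hence any separating sentence $\varphi$ has quantifier rank at least $m$, and since the number of quantifiers in a formula is always an upper bound on its quantifier rank, $\varphi$ must contain at least $m$ quantifiers.

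There is essentially no obstacle here beyond unwinding the definitions carefully; the only thing to watch is that the two types actually affected by the rewiring of the single point both lie safely above the threshold $m-1$ in \emph{both} models, which is exactly what the ascending enumeration and the value $m = |\pi_{\ell-1}|_\MM$ guarantee.
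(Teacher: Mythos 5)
Your proof is correct, but it takes a different route from the paper's. You reduce the lemma to the fact, asserted in the preliminaries, that $\equiv_d$ coincides with $\FO_d$-equivalence: you check that $\MM$ and $\MM'$ only disagree on the counts of $\pi_{\ell-1}$ and $\pi_\ell$, both of which stay at or above $m-1$ in both models, so $\MM \equiv_{m-1} \MM'$, and then you use the (true) observation that quantifier rank never exceeds the number of quantifier occurrences. The paper instead proves the lemma by playing the prenex formula size game $\game(r_0,m-1,\AAA_0,\BB_0)$ directly: Delilah maintains, through every quantifier move, a pair of pebbled models $(\MM,s)$ and $(\MM',s')$ that agree on all atomic formulas, by answering each point of a type $\pi_i$ with $i<\ell-1$ by itself and each point of type $\pi_{\ell-1}$ or $\pi_\ell$ by a fresh point of the same type (possible because both models have at least $m-1$ points of each of these types and at most $m-1$ variables have been placed). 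Your argument is shorter and leans on a characterization the paper labels ``easy to see'' but does not prove, so it is less self-contained; more importantly, the paper's explicit game strategy is not a detour, because it is reused almost verbatim inside the proof of the $3m-3$ size lower bound, where Delilah runs the same answering protocol separately on each connected component of the variable graph of $\Gamma$. Your approach establishes the quantifier count cleanly but would leave that later argument without its key ingredient.
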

\begin{proof}
    Let $r_0 > |\pi_{\ell-1}|-1$. We show that D has a winning strategy for the formula size game $\game(r_0, |\pi_{\ell-1}|-1, \AAA_0, \BB_0)$. By Theorem \ref{thm:gameworks}, this proves the claim.

    We show that in any position of such a game, there is a pair $(\MM, s) \in \AAA$ and $(\MM', s') \in \BB$ of models that cannot be separated by any atomic formula. At the starting position, the single models in $\AAA_0$ and $\BB_0$ are such a pair as no variables have been quantified. We proceed to show that D can maintain this pair of models through any move of S. We only treat one of each pair of dual moves as the other is handled the same way.

    $\exists$-move: S chooses a function $f : \AAA \to M$. We focus on the point $a = f(\MM, s)$ chosen for the model $(\MM, s) \in \AAA$. On the other side, copies of $(\MM', s') \in \BB$ are generated for each point $b \in M$, but we restrict attention to only one as follows. If there is a previously quantified variable $x$ with $s(x) = a$, then we choose $b = s'(x)$. Otherwise we choose a new point $b$ of the same type as $a$. If the type of $a$ is $\pi_i$ with $i < \ell-1$, then $\MM$ and $\MM'$ have the same points of type $\pi_i$ so we may choose $b = a$. If $i \in \{\ell-1, \ell\}$, then both $\MM$ and $\MM'$ have at least $|\pi_{\ell-1}|-1$ points of the type $\pi_i$ so we may choose a fresh $b$ of the same type. The new pair of models found in this manner is clearly atomic-equivalent.

    Phase change: With no changes to the sets of models $\AAA$ and $\BB$, the important pair of models is still clearly present in the next position.

    $\land$-move: S chooses splits $r_1 + r_2 + 1 = r$ and $\BB_1 \cup \BB_2 = \BB$. Now the model $(\MM', s') \in \BB$ is in $\BB_1$ or $\BB_2$ and $\AAA$ remains unchanged. Thus our model pair is present in one of the positions $(r_1,0, \AAA, \BB_1)$ and $(r_2,0, \AAA, \BB_2)$. By choosing such a position, D maintains the pair of models.

    Atomic move: Since the pair of models is atomic-equivalent, D wins after any atomic move.
\end{proof}

The next lemma concerns the atomic phase. We show that if the number of different atomic formulas required to separate the model sets $\AAA$ and $\BB$ is too large, D wins the game.
\begin{lemma}\label{lem:qfreephase}
    In a game $\game(r_0, q_0, \AAA_0, \BB_0)$, let $(r, 0, \AAA, \BB)$ be the first position of the atomic phase and let $\Gamma$ be a minimum size set of atomic formulas such that for every $(\MM, s) \in \AAA$ and $(\MM', s') \in \BB$,  there is $\alpha \in \Gamma$ with $(\MM, s) \vDash \alpha$ and $(\MM',s') \nvDash \alpha$. If $r < 2|\Gamma|-1$, then D has a winning strategy from the position $(r, 0, \AAA, \BB)$.
\end{lemma}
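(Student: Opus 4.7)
The plan is induction on the resource $r$. For the base case $r = 0$, the game rules immediately hand D the win, so there is nothing to check. For the inductive step, assume the statement for all smaller values of $r$, and consider a position $(r,0,\AAA,\BB)$ with $r \geq 1$ and $r < 2|\Gamma|-1$. Combining these two inequalities forces $|\Gamma| \geq 2$, a fact used in every subcase below. I would then analyze each of the three moves S can make.

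For an atomic move, since $|\Gamma| \geq 2$, no single atomic formula distinguishes every pair $((\MM,s),(\MM',s')) \in \AAA \times \BB$ in the required direction. Hence whichever atom S selects, some pair witnesses D's win. For a $\land$-move S picks $r_1 + r_2 + 1 = r$ and a cover $\BB_1 \cup \BB_2 = \BB$. Let $\Gamma_i$ be a minimum atomic separating set for $(\AAA,\BB_i)$. The central observation is that $\Gamma_1 \cup \Gamma_2$ separates $(\AAA,\BB)$: any $(\MM',s') \in \BB$ lies in some $\BB_i$ and is therefore distinguished from every $(\MM,s) \in \AAA$ by a formula in $\Gamma_i \subseteq \Gamma_1 \cup \Gamma_2$. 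Thus $|\Gamma| \leq |\Gamma_1|+|\Gamma_2|$. If both $r_i \geq 2|\Gamma_i|-1$ held, then
\[
r \;=\; r_1 + r_2 + 1 \;\geq\; 2|\Gamma_1| + 2|\Gamma_2| - 1 \;\geq\; 2|\Gamma| - 1,
\]
contradicting $r < 2|\Gamma|-1$. So some index $i$ satisfies $r_i < 2|\Gamma_i|-1$, and D moves to the position $(r_i,0,\AAA,\BB_i)$, where the induction hypothesis delivers a winning strategy (note $r_i < r$). A $\lor$-move is handled symmetrically: S splits $\AAA = \AAA_1 \cup \AAA_2$, and taking $\Gamma_i$ minimum for $(\AAA_i,\BB)$ gives the analogous bound $|\Gamma| \leq |\Gamma_1|+|\Gamma_2|$ and the same pigeonhole argument.

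The main technical point I expect to need care with is the inequality $|\Gamma| \leq |\Gamma_1|+|\Gamma_2|$, i.e., the verification that merging minimum separating sets on the two halves of S's split remains separating for the whole pair $(\AAA,\BB)$; everything else is arithmetic. A minor degenerate case worth checking is when a split makes some $\BB_i$ (respectively $\AAA_i$) empty, so $|\Gamma_i|=0$ and the target condition $r_i < 2|\Gamma_i|-1 = -1$ is unsatisfiable on that side. In that situation the other side carries the full separating set $\Gamma$, and the bound $r_{3-i} \leq r - 1 < 2|\Gamma| - 2 < 2|\Gamma|-1$ ensures it is exactly the side D picks, so D's strategy is always well defined.
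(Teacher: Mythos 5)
Your proof is correct and follows essentially the same route as the paper's: the atomic move is blocked because $|\Gamma|\geq 2$ and minimality rules out a single separating atom, and the $\land$/$\lor$ moves are handled by the observation that $\Gamma_1\cup\Gamma_2$ separates the whole pair, so $|\Gamma|\leq|\Gamma_1|+|\Gamma_2|$ and the arithmetic forces some $r_i<2|\Gamma_i|-1$. The only cosmetic difference is that you phrase it as an explicit induction on $r$ while the paper phrases it as maintaining the invariant $r<2|\Gamma|-1$; your extra check of the empty-split degenerate case is a nice touch but is already subsumed by the general pigeonhole argument.
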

\begin{proof}
    We show that every move S can make either ends the game in a win for D, or maintains the condition $r < 2|\Gamma|-1$ in the next position. Assume this condition holds in position $(r, 0, \AAA, \BB)$.

    Atomic move: S chooses an atomic formula $\alpha$. Since $1 \leq r < 2|\Gamma|-1$, we have $|\Gamma| \geq 2$ so the single atomic formula $\alpha$ does not separate $\AAA$ from $\BB$ and D wins.

    $\land$-move: S chooses splits $r_1 + r_2 + 1 = r$ and $\BB_1 \cup \BB_2 = \BB$. Assume for contradiction that there are sets $\Gamma_1$ and $\Gamma_2$ of atomic formulas such that $\Gamma_i$ separates $\AAA$ from $\BB_i$ and $r_i \geq 2|\Gamma_i|-1$. Now for every pair of models $(\MM, s) \in \AAA$ and $(\MM', s') \in \BB$ we have $(\MM', s') \in \BB_1$ or $(\MM', s') \in \BB_2$ so the set $\Gamma_1 \cup \Gamma_2$ separates $\AAA$ from $\BB$. Recalling that $\Gamma$ is a separating set of minimum size and $r < 2|\Gamma| -1$, we also have $r < 2|\Gamma_1 \cup \Gamma_2|-1 \leq 2(|\Gamma_1| + |\Gamma_2|)-1 \leq r_1 + r_2 + 1 = r$, which is a contradiction. Thus we have $r_1 < 2|\Gamma_1|-1$ or $r_2 < 2|\Gamma_2|-1$. By choosing the correct position D can maintain the required condition.

    $\lor$-move: Identical to the $\land$-move with the roles of $\AAA$ and $\BB$ switched.
\end{proof}

    We are now ready for the main theorem of this section.

    \begin{theorem}\label{thm:lower-bound}
        Let $\MM$ be a model of size $n$. Let $T = \{\pi_1, \dots, \pi_\ell\}$ be the types realized in $\MM$, enumerated in ascending order of numbers of realizing points. Now
        $
            C(\MM) \geq 3|\pi_{l-1}|-3.
        $
    \end{theorem}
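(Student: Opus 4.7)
The plan is to read off Theorem \ref{thm:lower-bound} directly from the three lemmas that precede it, combined with the game characterization Theorem \ref{thm:gameworks}. Setting $m = |\pi_{\ell-1}|$, I would use the model $\MM'$ introduced just before Lemma \ref{lem:quantifiers}, namely the model obtained from $\MM$ by changing a single point from type $\pi_{\ell-1}$ to type $\pi_\ell$. Since the resulting tuples of type multiplicities differ, $\MM'$ is not isomorphic to $\MM$, and hence every $\FO[\tau]$-sentence $\varphi$ that defines $\MM$ satisfies $\MM \vDash \varphi$ and $\MM' \nvDash \varphi$. In other words, $\varphi$ separates $\AAA_0 = \{(\MM, \emptyset)\}$ from $\BB_0 = \{(\MM', \emptyset)\}$.

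Once this observation is in place, the final unnamed lemma above immediately yields $|\varphi| \geq 3m - 3 = 3|\pi_{\ell-1}| - 3$, and the theorem follows. From a higher-level perspective, the reason this argument produces a bound governed by the \emph{second}-largest type is that moving a point between the two most populous types $\pi_{\ell-1}$ and $\pi_\ell$ leaves Delilah with enough slack in both models to reassign witnesses without exhausting either type. Lemma \ref{lem:quantifiers} captures the quantifier-phase side of this slack, while Lemma \ref{lem:qfreephase} together with the connected-component analysis of the atomic separator set $\Gamma$ handles the atomic phase; these are the two places where the real combinatorial work sits, and both have already been discharged.

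I expect that strengthening the bound beyond $3|\pi_{\ell-1}|-3$ would require a more intricate witness, for instance a multi-point swap involving several types, tuned to eliminate the $-3$ additive slack; but with $\MM'$ as above there is essentially nothing further to verify, so the main obstacle for this theorem has been pushed entirely into the lemmas.
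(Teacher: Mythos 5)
Your proposal is correct and follows exactly the paper's route: the theorem is read off from the preceding lemmas by observing that any sentence defining $\MM$ must separate $\AAA_0 = \{(\MM,\emptyset)\}$ from $\BB_0 = \{(\MM',\emptyset)\}$, where $\MM'$ moves one point from $\pi_{\ell-1}$ to $\pi_\ell$, and the final lemma then gives size at least $3m-3$. The paper likewise treats the theorem as an immediate consequence of those lemmas, so there is nothing to add.
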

    \begin{proof}
    % We only give some intuition on the main idea of the proof here, see \ref{app:lemma9} for the full proof. We let $\Gamma$ be the set of atomic formulas in a separating formula $\varphi$. We consider the quantified variables of $\varphi$ as nodes in a graph, where edges are given by atomic formulas $x = y$ or $x \neq y$ in $\Gamma$. We proceed as in Lemma 7, but instead of finding a fully atomic equivalent pair of models, we find a pair atomic equivalent with respect to each connected component of the variable graph separately. We thus show that $\varphi$ has at least $|\pi_{l-1}|-1$ atomic formulas and together with the quantifiers and connectives this gives us the lower bound. 
    First, we need the following definition. Let $\Gamma$ be a set of atomic $\mathrm{FO}$-formulas. We denote the set of variables occurring in formulas of $\Gamma$ by $V(\Gamma)$. We define the \textbf{variable graph of $\Gamma$} as $G(\Gamma) = (V(\Gamma), E(\Gamma))$, where $(x,y) \in E(\Gamma)$ iff $x = y \in \Gamma$ or $x \neq y \in \Gamma$. We say that $\Delta \subseteq \Gamma$ is a \textbf{connected component of $\Gamma$} if $G(\Delta)$ is a maximal connected subgraph of $G(\Gamma)$.

    For convenience, we denote here $m := |\pi_{\ell-1}|$.

    Consider a formula size game $\game(3m-4, q_0, \AAA_0, \BB_0)$. We show that D has a winning strategy for this game, thus proving the claim by Theorem \ref{thm:gameworks}.

    By Lemma \ref{lem:quantifiers} we see that to have a chance of winning, S must begin the game with at least $m$ quantifiers. We then move on to the first position $(r, 0, \AAA, \BB)$ of the atomic phase, where $r \leq 2m-4$. Let $\Gamma$ be a set of atomic formulas such that for every $(\MM, s) \in \AAA$ and $(\MM', s') \in \BB$, there is $\alpha \in \Gamma$ such that $(\MM, s) \vDash \alpha$ and $(\MM', s') \nvDash \alpha$. If $|\Gamma| \geq m-1$ for every such $\Gamma$, then $r \leq 2m - 4 = 2(m-1)-2 < 2|\Gamma|- 1$ so D has a winning strategy by Lemma \ref{lem:qfreephase}. We now assume for contradiction that there exists such a $\Gamma$ with $|\Gamma| \leq m-2$.

    %Assume first that $\Gamma$ is connected. Since a connected graph with $k$ vertices has at least $k-1$ edges, we see that at most $m-1$ variables occur in the atomic formulas in $\Gamma$. We denote the set of these variables by $V(\Gamma)$.

    Consider the connected components $\Delta$ of $\Gamma$. Since a connected graph with $k$ edges has at most $k+1$ vertices, we see that for every $\Delta$, at most $m-1$ variables occur in the atomic formulas of $\Delta$.
    %$|V(\Delta)| \leq m-1$. 

    We now explain why there is a \emph{single} pair of models $(\MM, s) \in \AAA$ and $(\MM', s') \in \BB$ such that they are atomic equivalent with respect to the variables in $V(\Delta)$ for \emph{every} connected component $\Delta$ of $\Gamma$. We consider the quantifier moves S made in the quantifier phase in the order the moves were made. For every variable $x$ used in a $\exists$-move, we consider $\Delta$ such that $x \in V(\Delta)$. We proceed as in the proof of Lemma \ref{lem:quantifiers}, with respect to only the variables in $V(\Delta)$. That is, if there is a previously quantified variable $y \in V(\Delta)$ such that $s(y) = s(x)$, we choose the opposing model where $s'(x) = s'(y)$. Otherwise, we choose a point with no variables of $V(\Delta)$ attached. Each $\Delta$ uses at most $m-1$ variables so we do not run out of fresh points of any type. The same protocol works for $\forall$-moves as well.

    Note that these choices of models are made based on the connected component $\Delta$ of $x$, completely independently of other components. Since every variable $x$ is in exactly one component $\Delta$, this means that the pair of models we end up with is simultaneously atomic equivalent with regards to each component separately. Thus the pair of models cannot be separated by any atomic formula in $\Gamma$. This is a contradiction with the definition of $\Gamma$ and proves the claim.
    %Now recall that even though we have argued about a single set $\Gamma$ and a pair of models for that set, there are no choices of D in the quantifier phase. The arguments we make merely point out models present in position $P$, rather than leave out any other models. Thus every separating set of literals $\Gamma$ has in position $P$ its own pair of models that cannot be separated with atomic formulas in $\Gamma$. This is a contradiction with the definition of $\Gamma$ and proves the claim.
\end{proof}

    We now consider lower bounds in the setting of $\FO_d$. Recall that an equivalence class of $\equiv_d$ is characterized by a tuple $(m_1, \dots, m_t)$, where $t = 2^{|\tau|}$, $m_i \leq d$, $\sum_{i = 1}^t m_i \leq n$ and if $\sum_{i = 1}^t m_i < n$, then $m_j = d$ for some $j$. 
    Let $\overline{m} = (m_1, \dots, m_t)$ be such a tuple in ascending order of the numbers $m_i$. If $\sum_{i = 1}^t m_i = n$, then $\overline{m}$ corresponds to an isomorphism class and the lower bounds above work as is. Thus we assume that $\sum_{i = 1}^t m_i < n$ and consequently $m_t = d$. 
    By taking a model $\MM$ in the equivalence class $\mathcal{M}_{\overline{m}}$ with a maximal number of points of the type $\pi_t$, we can directly obtain the model $\MM'$ as above and get a lower bound on defining the class $\mathcal{M}_{\overline{m}}$ in full $\FO$. This bound directly extends also to $\FO_d$, as limiting quantifier rank gives no advantage in terms of formula size. 

    %Consider now the model $\MM$, where $|\pi_i| = m_i$ for $i < t$ and the rest of the points realize the type $\pi_t$. Now since $\sum_{i = 1}^t m_i < n$, we have $|\pi_t| > d$ so $\pi_t$ is the type with most realizing points. Additionally, the type $\pi_{t-1}$ has the second most realizing points. The model $\MM'$ obtained by changing the type of one point from $\pi_{t-1}$ to $\pi_t$ is no longer in the equivalence class defined by the tuple $\overline{m}$, since $|\pi_{t-1}| < m_{t-1}$. 

    %The lower bounds we established above for the description complexity of single models in full $\FO$ were based on separating two models $\MM$ and $\MM'$ from each other. Using the two models we defined in the previous paragraph, we obtain a lower bound on defining the equivalence class $\mathcal{M}_{\overline{m}}$ in full $\FO$. Clearly this bound directly extends to $\FO_d$ as well, since limiting quantifier rank gives no advantage in terms of formula size. 
    
    %The lower bounds established above work directly for $\FO_d$ as well. We note that while in $\FO_d$ formulas cannot be in general transformed into prenex normal form without leaving the fragment, this can still be done for formulas with at most $d$ quantifiers in total. Using the models $\MM$ and $\MM'$ we just defined, we have for Lemma \ref{lem:quantifiers} the number $m = |\pi_{\ell-1}| = m_{t-1} \leq d$ so the proof of the lemma only considers formulas with at most $d-1$ quantifiers and prenex normal form causes no issues. 

    \begin{corollary}\label{cor:d-lower-bound}
        Let $\mathcal{M}_{\overline{m}}$ be an equivalence class of $\equiv_d$, where $\overline{m} = (m_1, \dots, m_t)$ is the corresponding tuple with the numbers in ascending order. Now
        $
            C(\mathcal{M}_{\overline{m}}) \geq 3m_{t-1}-3.
        $
    \end{corollary}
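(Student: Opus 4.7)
The plan is to reduce directly to the lower bound already proved for full $\FO$ (Theorem \ref{thm:lower-bound}), exactly along the lines sketched in the paragraphs preceding the corollary. First I would split into two cases based on whether $\sum_{i=1}^{t} m_i = n$ or $\sum_{i=1}^{t} m_i < n$. In the former case, $\mathcal{M}_{\overline{m}}$ is already an isomorphism class, and since $\overline{m}$ is in ascending order, the second-largest type in any representative model has exactly $m_{t-1}$ realizing points, so Theorem \ref{thm:lower-bound} gives the bound immediately.

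For the main case $\sum_{i=1}^{t} m_i < n$, since all $m_i \leq d$ and the tuple is ascending, this forces $m_t = d$. I would pick the concrete representative $\MM \in \mathcal{M}_{\overline{m}}$ with $|\pi_i|_\MM = m_i$ for $i < t$ and $|\pi_t|_\MM = n - \sum_{i<t} m_i > d$, so that $\pi_t$ is the largest realized type and $\pi_{t-1}$ is the second largest with exactly $m_{t-1}$ realizing points. Then I would form $\MM'$ by moving a single point from $\pi_{t-1}$ to $\pi_t$; this gives $|\pi_{t-1}|_{\MM'} = m_{t-1} - 1 < m_{t-1}$, so $\MM' \notin \mathcal{M}_{\overline{m}}$.

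Any formula $\varphi$ defining $\mathcal{M}_{\overline{m}}$ must satisfy $\MM \models \varphi$ and $\MM' \not\models \varphi$, i.e., it separates the singleton sets $\AAA_0 = \{(\MM,\emptyset)\}$ and $\BB_0 = \{(\MM',\emptyset)\}$ used in the proof of Theorem \ref{thm:lower-bound}. That proof established, via Lemmas \ref{lem:quantifiers} and \ref{lem:qfreephase} together with the connected-component argument, that any such separating $\FO$-formula has size at least $3m_{t-1} - 3$ (noting that the pair $(\MM,\MM')$ I constructed here is exactly the kind of pair used there, with the role of $|\pi_{\ell-1}|$ played by $m_{t-1}$). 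Finally, since $\FO_d[\tau] \subseteq \FO[\tau]$ and restricting quantifier rank cannot decrease the size of the shortest separating formula, the same bound applies to $C_d(\mathcal{M}_{\overline{m}})$, yielding the claim.

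There is essentially no new technical obstacle: the only thing to be careful about is confirming that the constructed $\MM$ really is a representative of $\mathcal{M}_{\overline{m}}$ (which uses $|\pi_t|_\MM > d$ to ensure the coordinate for $\pi_t$ is $d$ in the tuple encoding), and that the switch from $\pi_{t-1}$ to $\pi_t$ genuinely leaves $\mathcal{M}_{\overline{m}}$. Both are immediate from the ordering assumption $m_{t-1} < d$ (if $m_{t-1} = d$ as well, a tiny adjustment is needed: then $|\pi_{t-1}|_{\MM'} = |\pi_{t-1}|_\MM - 1$ could still be $\geq d$, so one should instead choose the $(t{-}1)$-st coordinate of $\MM$ to equal exactly $d$ and move a point so it drops to $d-1$, again leaving the class). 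Either way, the reduction and hence the bound $C(\mathcal{M}_{\overline{m}}) \geq 3m_{t-1} - 3$ follow.
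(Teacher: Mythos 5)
Your proposal is correct and follows essentially the same route as the paper: the same case split on whether $\sum_i m_i = n$, the same representative $\MM$ with $|\pi_i| = m_i$ for $i < t$ and the remaining points of type $\pi_t$, the same single-point perturbation $\MM \to \MM'$, and the same reduction to the game-based $\FO$ lower bound with $m = m_{t-1}$. The edge case you flag ($m_{t-1} = d$) is already handled by the construction, since fixing $|\pi_{t-1}|_\MM = m_{t-1}$ exactly guarantees the perturbed model drops below the threshold and leaves the class.
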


    %If $d$ occurs in the tuple $\overline{m}$ at least twice, then we get a lower bound of $3d-3$. 

\section{Expected description complexity}\label{sec:expected-description-complexity}

Using Theorems \ref{thm:upper-bound} and \ref{thm:lower-bound} we can determine asymptotically the expected description complexity of a \emph{random} unary structure. That is, we determine the asymptotic behavior of the quantity
$\mathbb{E}_n[C] := \frac{1}{2^{|\tau|n}}\sum_{\MM} C(\MM).$
Here the sum is taken over all the $\tau$-models $\MM$ of size $n$.

We say that a $\tau$-model $\MM$ is \textbf{balanced},  if for every $\tau$-type $\pi$, we have 
$||\pi|_{\MM} - \frac{n}{2^{|\tau|}}| = o(n).$
A model is balanced, if every type is realized roughly the same number of times, allowing for a sublinear discrepancy. We use the well-known Chernoff bounds to establish that a random model is very likely balanced.

\begin{proposition}[Multiplicative Chernoff bound]\label{prop:chernoff-bound}
    Let $X := \sum_{i = 1}^n X_i$ be a sum of independent $0$-$1$-valued random variables, where $X_i = 1$ with probability $p$ and $X_i = 0$ with probability $1 - p$. Let $\mu := \mathbb{E}[X]$. Now, for every $0 \leq \delta < 1$ we have that
    $
        \Pr[|X - \mu| \geq \delta \mu] \leq 2e^{-\delta^2\mu/3} 
    $
\end{proposition}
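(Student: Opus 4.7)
The plan is to apply the classical exponential moment (Chernoff--Bernstein) method, separately bounding the upper deviation $\Pr[X \geq (1+\delta)\mu]$ and the lower deviation $\Pr[X \leq (1-\delta)\mu]$, and then combining them by the union bound. Since this is a standard textbook result, I would give the usual argument; the only nonstandard piece is the final analytic comparison with the clean exponent $-\delta^2\mu/3$.

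For the upper tail, for any $t > 0$ Markov's inequality applied to $e^{tX}$ gives
\[
\Pr[X \geq (1+\delta)\mu] \;=\; \Pr[e^{tX} \geq e^{t(1+\delta)\mu}] \;\leq\; e^{-t(1+\delta)\mu}\,\mathbb{E}[e^{tX}].
\]
By independence, $\mathbb{E}[e^{tX}] = \prod_{i=1}^n \mathbb{E}[e^{tX_i}] = (1 + p(e^t-1))^n$, and from $1+x \leq e^x$ I would derive $\mathbb{E}[e^{tX}] \leq e^{np(e^t-1)} = e^{\mu(e^t-1)}$. Optimizing by taking $t = \ln(1+\delta)$ then yields the familiar estimate
\[
\Pr[X \geq (1+\delta)\mu] \;\leq\; \left(\frac{e^\delta}{(1+\delta)^{1+\delta}}\right)^\mu.
\]

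To convert this into the stated form, I would establish the elementary inequality
\[
\frac{e^\delta}{(1+\delta)^{1+\delta}} \leq e^{-\delta^2/3} \qquad \text{for } 0 \leq \delta < 1,
\]
i.e.\ $g(\delta) := \delta - (1+\delta)\ln(1+\delta) + \delta^2/3 \leq 0$. One checks $g(0) = 0$ and verifies $g'(\delta) \leq 0$ on $[0,1)$ via the Taylor expansion $\ln(1+\delta) = \delta - \delta^2/2 + \delta^3/3 - \cdots$, where the alternating-series estimate controls the remainder. The analogous argument applied with $t < 0$ (equivalently, to the sum $n - X$) gives the lower tail bound $\Pr[X \leq (1-\delta)\mu] \leq e^{-\delta^2\mu/2}$, which is in fact stronger than needed. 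Summing the two one-sided estimates and using $-\delta^2/2 \leq -\delta^2/3$ produces the claimed bound $2e^{-\delta^2\mu/3}$.

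The method is completely routine, so no step should present a genuine obstacle. The only place where care is required is the calculus inequality $g(\delta) \leq 0$ that packages the optimized Chernoff exponent into the clean form $-\delta^2/3$; everything else (Markov's inequality, factorization via independence, the bound $1+x \leq e^x$, and optimization over $t$) is mechanical.
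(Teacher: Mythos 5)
Your proof is correct and is exactly the standard derivation; the paper itself gives no argument here but simply cites Corollary 4.6 of Mitzenmacher and Upfal, whose proof is the same exponential-moment computation combined with the calculus inequality $\delta-(1+\delta)\ln(1+\delta)\leq-\delta^2/3$ for the upper tail and the stronger exponent $-\delta^2/2$ for the lower tail. The only spot needing a touch of care is your Taylor-series justification of $g'(\delta)\leq 0$ (equivalently $\ln(1+\delta)\geq 2\delta/3$), where the alternating-series bound $\ln(1+\delta)\geq\delta-\delta^2/2$ only covers $\delta\leq 2/3$ and one should finish the range $[2/3,1)$ by, say, noting that $\ln(1+\delta)-2\delta/3$ is concave with nonnegative values at $\delta=0$ and $\delta=1$.
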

\begin{proof}
    See for example Corollary 4.6 in \cite{mitzenmacher2005probability}.
\end{proof}

\begin{lemma}\label{lemma:random-model-is-balanced}
The probability that a random $\tau$-model of size $n$ is balanced is at least $1 - 2^{|\tau| + 1}/n$.
\end{lemma}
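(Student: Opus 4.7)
The plan is to reduce to a routine Chernoff-plus-union-bound argument on the individual type counts. Sampling a uniformly random $\tau$-model of size $n$ is equivalent to independently assigning each of the $n$ domain elements a $\tau$-type, uniformly at random from the $2^{|\tau|}$ possible types (equivalently, deciding each predicate $P \in \tau$ for each element independently with probability $1/2$). So for any fixed type $\pi$, the count $|\pi|_\MM$ is a sum of $n$ independent Bernoulli random variables, each equal to $1$ with probability $p = 2^{-|\tau|}$, and hence has mean $\mu = n/2^{|\tau|}$.

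I will apply Proposition \ref{prop:chernoff-bound} to $|\pi|_\MM$ with a carefully chosen $\delta$. Specifically, set $\delta := \sqrt{3 \ln n / \mu}$; then $\delta^2 \mu / 3 = \ln n$, so the Chernoff bound yields
\[
\Pr\bigl[\,\bigl||\pi|_\MM - \mu\bigr| \geq \delta \mu\,\bigr] \leq 2 e^{-\ln n} = \frac{2}{n}.
\]
The associated absolute deviation is $\delta \mu = \sqrt{3 \mu \ln n} = \sqrt{3 n \ln n / 2^{|\tau|}}$, which is $O(\sqrt{n \log n}) = o(n)$, so the event $\{|\pi|_\MM - n/2^{|\tau|}| \leq \delta \mu\}$ witnesses that $\pi$ is ``balanced'' in the sense of the lemma (for all sufficiently large $n$; for small $n$, the claimed bound $1 - 2^{|\tau|+1}/n$ is vacuous or trivial and needs no argument). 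Of course, if one wishes to be fussy one can take any $\delta$ satisfying $\delta^2 \mu \geq 3 \ln n$ and $\delta \mu = o(n)$, for instance $\delta = n^{-1/3}$ once $n$ is large enough, since then $\delta \mu = \mu / n^{1/3} = o(n)$ and $\delta^2 \mu = n^{1/3}/2^{|\tau|} \geq 3 \ln n$ eventually.

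Finally I would union-bound over the $2^{|\tau|}$ types: the probability that \emph{some} type $\pi$ fails the balance condition is at most $2^{|\tau|} \cdot (2/n) = 2^{|\tau|+1}/n$, giving the claimed lower bound of $1 - 2^{|\tau|+1}/n$ on the probability that the model is balanced. The only genuine choice is the value of $\delta$, and this is entirely routine; there is no real obstacle, and the proof occupies at most a few lines once Proposition \ref{prop:chernoff-bound} is in hand.
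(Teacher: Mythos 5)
Your proof is correct and follows essentially the same route as the paper: apply the multiplicative Chernoff bound to each type count (a sum of $n$ independent Bernoulli variables with mean $\mu = n/2^{|\tau|}$), pick $\delta$ so the failure probability per type is $2/n$ with deviation $o(n)$, and union-bound over the $2^{|\tau|}$ types. If anything, your choice $\delta = \sqrt{3\ln n/\mu}$ is cleaner than the paper's, whose stated $\delta(n)$ does not quite make the exponent equal $\ln n$ as claimed.
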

\begin{proof}
 We will use Proposition \ref{prop:chernoff-bound}. For every type $\pi$ and $1 \leq i \leq n$ we associate a $0$-$1$-valued random variable $X_{\pi,i}$ such that \(X_{\pi,i} = 1\) with probability \(2^{-|\tau|}\) and \(X_{\pi,i} = 0\) with probability \(1 - 2^{-|\tau|}\). Intuitively this is an indicator random variable for the event ``the $i$th element received the type $\pi$''. Now $X_{\pi} = \sum_{i = 1}^n X_{\pi,i}$ is a random variable that counts the number of times $\pi$ is realized. Clearly $\mathbb{E}[X_\pi] = n/2^{|\tau|}$, which also holds for every type $\pi$. Set $\mu := n/2^{|\tau|}$ and $\delta(n) := \sqrt{\frac{3}{2^{|\tau|}}\frac{\ln(n)}{n}}$. Now
    \[2e^{-\delta(n)^2 \mu / 3} = 2n^{-1}\]
    and
    \[\delta(n)\mu = \frac{\sqrt{3}}{2^{|\tau|}\sqrt{2^{|\tau|}}}\sqrt{\ln(n)n}.\]
    Thus, by Proposition \ref{prop:chernoff-bound}, we know that
    \[
        \Pr\bigg[|X_\pi - \mu| \geq \frac{\sqrt{3}}{2^{|\tau|}\sqrt{2^{|\tau|}}} \sqrt{\ln(n)n}\bigg] \leq 2n^{-1}
    \]
    Applying the union bound, we also see that
    \begin{align*}
    & \Pr\bigg(\exists \pi \ : \ |X_\pi - \mu| 
    \geq \frac{\sqrt{3}}{2^{|\tau|}\sqrt{2^{|\tau|}}} \sqrt{\ln(n)n} \bigg) \\
    & \leq \sum_{\pi} \Pr\bigg[|X_\pi - \mu| \geq \frac{\sqrt{3}}{2^{|\tau|}\sqrt{2^{|\tau|}}} \sqrt{\ln(n)n}\bigg] \\
    & \leq 2^{|\tau| + 1}n^{-1}
    \end{align*}
    Thus, with high probability in a random model $\MM$ of size $n$ we have for every type $\pi$ that
    \[\bigg||\pi|_{\MM} - \frac{n}{2^{|\tau|}}\bigg| \leq \frac{\sqrt{3}}{2^{|\tau|}\sqrt{2^{|\tau|}}}\sqrt{\ln(n)n}.\]
    Hence, with high probability a random model of size $n$ is balanced.
    %A routine calculation using Proposition \ref{prop:chernoff-bound}. See \ref{app:lemma13} for details.
\end{proof}

The previous lemma gives a rough characterization of random $\tau$-models. Using this characterization together with Theorem \ref{thm:lower-bound} we can determine asymptotically the expected description complexity of a random $\tau$-model.

\begin{theorem}
    $\mathbb{E}_n[C] \sim \frac{3n}{2^{|\tau|}}$
\end{theorem}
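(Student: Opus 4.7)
The plan is to split the expectation into the contribution from balanced models and from non-balanced models, and to show that on balanced models the upper bound of Theorem \ref{thm:upper-bound} and the lower bound of Theorem \ref{thm:lower-bound} already pin $C(\MM)$ to $\frac{3n}{2^{|\tau|}} + o(n)$. The non-balanced models contribute only a lower-order term thanks to Lemma \ref{lemma:random-model-is-balanced} and the crude bound of Corollary \ref{cor:maximum-description-complexity}.

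First I would extract the quantitative estimate implicit in the proof of Lemma \ref{lemma:random-model-is-balanced}: with probability at least $1 - 2^{|\tau|+1}/n$ a random $\tau$-model $\MM$ satisfies $\bigl||\pi|_\MM - n/2^{|\tau|}\bigr| \leq \frac{3}{2^{2|\tau|}}\sqrt{n \ln n}$ for every type $\pi$. For $n$ large enough this forces every type to be realized, so $\ell = t = 2^{|\tau|}$, and both $|\pi_{\ell-1}|$ and $|\pi_\ell|$ lie within $O(\sqrt{n \ln n})$ of $n/2^{|\tau|}$. Then Theorem \ref{thm:upper-bound} (via the $3|\pi_\ell|+c_\tau$ branch) gives
\[
C(\MM) \leq \frac{3n}{2^{|\tau|}} + O(\sqrt{n \ln n}),
\]
while Theorem \ref{thm:lower-bound} gives
\[
C(\MM) \geq 3|\pi_{\ell-1}| - 3 \geq \frac{3n}{2^{|\tau|}} - O(\sqrt{n \ln n}).
\]
Hence $C(\MM) = \frac{3n}{2^{|\tau|}} + o(n)$ uniformly over balanced models.

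Finally I would split the defining sum
\[
\mathbb{E}_n[C] = \frac{1}{2^{|\tau|n}} \sum_{\MM \text{ balanced}} C(\MM) + \frac{1}{2^{|\tau|n}} \sum_{\MM \text{ not balanced}} C(\MM).
\]
The balanced part contributes $\bigl(1 - O(1/n)\bigr)\bigl(\tfrac{3n}{2^{|\tau|}} + o(n)\bigr) = \tfrac{3n}{2^{|\tau|}} + o(n)$. For the non-balanced part, Corollary \ref{cor:maximum-description-complexity} bounds $C(\MM) \leq 2n + c_\tau$ for every model, so Lemma \ref{lemma:random-model-is-balanced} bounds this contribution by $\frac{2^{|\tau|+1}}{n}(2n + c_\tau) = O(1)$. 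Adding the two pieces yields $\mathbb{E}_n[C] \sim \frac{3n}{2^{|\tau|}}$.

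The main obstacle is essentially conceptual rather than technical: one must ensure that both Theorems \ref{thm:upper-bound} and \ref{thm:lower-bound} give the \emph{same} leading constant $3/2^{|\tau|}$ on a typical model. This relies on the fact that in a balanced model not only the largest realized type but also the second largest one has size close to $n/2^{|\tau|}$, so that $3|\pi_\ell|$ and $3|\pi_{\ell-1}|$ both concentrate at $\frac{3n}{2^{|\tau|}}$; once this observation is in place the rest is routine arithmetic.
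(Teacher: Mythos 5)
Your proposal is correct and follows essentially the same route as the paper: split the expectation over balanced versus non-balanced models, bound the non-balanced contribution by $O(1)$ via Corollary \ref{cor:maximum-description-complexity} and Lemma \ref{lemma:random-model-is-balanced}, and observe that on balanced models the bounds $3|\pi_\ell|+c_\tau$ and $3|\pi_{\ell-1}|-3$ both concentrate at $\frac{3n}{2^{|\tau|}}$. If anything, your write-up is slightly more explicit than the paper's, which compresses the key step into ``using our results on description complexity'' without spelling out the $O(\sqrt{n\ln n})$ deviation or the fact that all $t$ types are realized in a balanced model.
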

\begin{proof}
    %We say that a $\tau$-model $M$ is \textbf{balanced}, if for every $\tau$-type $\pi$ we have that $|\pi|_M = n/2^{|\tau|} \pm O(\sqrt{n})$. Using standard concentration bounds, one can show that w.h.p. a random $\tau$-model is good and furthermore that the probability that a random model is not good decreases exponentially fast in $n$.

    To give an upper bound on $\mathbb{E}_n[C]$ we first rewrite it as follows:
    \begin{equation}\label{eq:chopped-up-equation}
        \mathbb{E}_n[C] 
        = \frac{1}{2^{|\tau|n}} \smashoperator[r]{\sum_{\text{$\MM$ balanced}}} C(\MM) + \frac{1}{2^{|\tau|n}} \smashoperator[r]{\sum_{\text{$\MM$ not balanced}}} C(\MM)
    \end{equation}
    Using Corollary \ref{cor:maximum-description-complexity} and Lemma \ref{lemma:random-model-is-balanced} we see that
    \begin{align*}
    & \frac{1}{2^{|\tau|n}} \smashoperator[r]{\sum_{\text{$\MM$ not balanced}}} C(\MM)  \leq \frac{1}{2^{|\tau|n}} \smashoperator[r]{\sum_{\text{$\MM$ not balanced}}} 2n + c_\tau  = \Pr[\text{$\MM$ is not balanced}] \cdot (2n + c_\tau) \\
    & \leq \frac{2^{|\tau| + 1}}{n} \cdot (2n + c_\tau)  = 2^{|\tau| + 2} + \frac{c_\tau2^{|\tau| + 1}}{n}  = \mathcal{O}(1).
    \end{align*}
    Since we are interested in the asymptotic behavior of $\mathbb{E}_n[C]$, the above shows that we can safely concentrate on the first sum in Equation (\ref{eq:chopped-up-equation}). Using Theorems \ref{thm:upper-bound} and \ref{thm:lower-bound} we see that if $\MM$ is balanced, then
    $\frac{3n}{2^{|\tau|}} - o(n) \leq C(\MM) \leq \frac{3n}{2^{|\tau|}} + o(n).$
    Hence\\
    \begin{comment}
        \[
         \Pr[\text{$\MM$ is balanced}] \cdot \bigg(\frac{3n}{2^{|\tau|}} - o(n)\bigg) \leq  \frac{1}{2^{|\tau|n}} \smashoperator[r]{\sum_{\text{$\MM$ balanced}}} C(\MM) 
         \leq \Pr[\text{$\MM$ is balanced}] \cdot \bigg(\frac{3n}{2^{|\tau|}} + o(n)\bigg).
        \]
        \end{comment}
    \begin{align*}
        & \Pr[\text{$\MM$ is balanced}] \cdot \bigg(\frac{3n}{2^{|\tau|}} - o(n)\bigg) \leq  \frac{1}{2^{|\tau|n}} \smashoperator[r]{\sum_{\text{$\MM$ balanced}}} C(\MM) \\ 
        & \leq \Pr[\text{$\MM$ is balanced}] \cdot \bigg(\frac{3n}{2^{|\tau|}} + o(n)\bigg)
    \end{align*} 

    Since $\Pr[\text{$\MM$ is balanced}]$ goes to one as $n \to \infty$, we see that 
    \[\dfrac{1}{2^{|\tau|n}} \smashoperator[r]{\sum_{\text{$\MM$ balanced}}} C(\MM) \sim \frac{3n}{2^{|\tau|}},\]
    which is what we wanted to show.
\end{proof}

\section{Entropy and description complexity}\label{sec:entropy}

In this section we establish results that illustrate how entropy and description complexity relate to each other. As one can already imagine after seeing our results on description complexity, there can be models with very close entropies and quite different description complexities. We can nevertheless use our results to \emph{exclude} many a priori possible combinations of description complexity and entropy.
%obtain clear bounds on this behaviour. 
For notational simplicity, we adopt the notation $t := 2^{|\tau|}$ in this section.

We begin by showing that the Boltzmann and Shannon entropies of a single model are essentially the same up to normalization. This underlines the fact that both entropies measure the same thing: the randomness of a model. 

\begin{theorem}\label{thm:entropy-connection}
    Let $\MM$ be a $\tau$-model of size $n$. Now 
    \[H_S(\MM) - \frac{1}{n}H_B(\MM) < \frac{(t-1)\log(\sqrt{2 \pi n})}{n} - \frac{\log(e)}{12n^2} + \frac{t\log(e)}{12n^2+n}.\] 
\end{theorem}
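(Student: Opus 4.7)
The strategy is to reduce the inequality to a refined form of Stirling's approximation applied to the multinomial coefficient that counts the isomorphism class. Since two unary $\tau$-models of size $n$ are isomorphic exactly when each type is realized the same number of times, the isomorphism class $\mathcal{M}$ of $\MM$ has size $\binom{n}{|\pi_1|,\ldots,|\pi_t|}=n!/\prod_i|\pi_i|!$, so
\[
H_B(\MM)=\log n! \;-\; \sum_{i\,:\,|\pi_i|>0}\log|\pi_i|!,
\]
while by definition $H_S(\MM)=\log n - \frac{1}{n}\sum_{i\,:\,|\pi_i|>0}|\pi_i|\log|\pi_i|$. Plugging both into $H_S(\MM)-\frac{1}{n}H_B(\MM)$ reduces the claim to an upper bound on
\[
\log n - \frac{1}{n}\log n! + \frac{1}{n}\sum_{i\,:\,|\pi_i|>0}\log|\pi_i|! - \frac{1}{n}\sum_{i\,:\,|\pi_i|>0}|\pi_i|\log|\pi_i|.
\]

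Next I would apply Robbins' refinement of Stirling, namely $\log k! = \log\sqrt{2\pi k} + k\log k - k\log e + r_k$ with $r_k\in\bigl(\log e/(12k+1),\,\log e/(12k)\bigr)$, to $\log n!$ and to every $\log|\pi_i|!$ with $|\pi_i|>0$. The crucial algebraic observation is that after substitution the dominant contributions cancel cleanly: the explicit $\log n$ and the $n\log n$ from $\log n!$ cancel, each $|\pi_i|\log|\pi_i|$ coming out of Stirling annihilates the corresponding Shannon term, and the $n\log e$ contributions cancel using $\sum_i|\pi_i|=n$. What is left is the purely correctional identity
\[
H_S(\MM)-\frac{1}{n}H_B(\MM) \;=\; \frac{1}{n}\Bigl(\sum_{i\,:\,|\pi_i|>0}\log\sqrt{2\pi|\pi_i|}-\log\sqrt{2\pi n}\Bigr)\;+\;\frac{1}{n}\Bigl(\sum_{i\,:\,|\pi_i|>0}r_{|\pi_i|}-r_n\Bigr).
\]

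The third step is to bound the two parenthesized sums separately. For the $\log\sqrt{2\pi k}$ piece, the bound $|\pi_i|\le n$ gives $\log\sqrt{2\pi|\pi_i|}\le\log\sqrt{2\pi n}$, and since at most $t$ types are realized the whole sum is at most $t\log\sqrt{2\pi n}$; subtracting the lone $\log\sqrt{2\pi n}$ term and dividing by $n$ yields the $(t-1)\log\sqrt{2\pi n}/n$ summand in the statement. For the remainder piece, I would use the two sides of Robbins' bound in opposite directions: the lower bound $r_n>\log e/(12n+1)$ gives $-r_n/n<-\log e/(12n^2+n)$, while the upper bound $r_{|\pi_i|}<\log e/(12|\pi_i|)$ combined with the count of at most $t$ nonzero types controls the positive sum. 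The main obstacle is precisely this last bookkeeping step: the two remainder contributions have opposite signs and comparable magnitudes of order $1/n^2$, so one must use the tight Robbins bounds (rather than the looser $1/(12k)$ bound on both sides) and align the $t$-dependence carefully, rather than relying on the crude $\sum_i 1/|\pi_i|\le t$ estimate which would give a much weaker constant than the stated $-\log e/(12n^2)+t\log e/(12n^2+n)$.
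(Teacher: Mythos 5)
Your overall route is the same as the paper's: write $H_B(\MM)$ as the logarithm of the multinomial coefficient $\binom{n}{|\pi_1|\,\cdots\,|\pi_t|}$, expand $\log(n!)$ and each $\log(|\pi_i|!)$ via the Robbins form of Stirling's approximation, note that the $n\log(n)$, $|\pi_i|\log(|\pi_i|)$ and $n\log(e)$ terms cancel exactly against $nH_S(\MM)$ using $\sum_i|\pi_i|=n$, and then control the leftover $\log\sqrt{2\pi k}$ and remainder terms. Your exact-identity formulation of the residue is actually cleaner than the paper's chain of inequalities, and your treatment of the $\log\sqrt{2\pi\,\cdot}$ part (at most $t$ realized types, each $|\pi_i|\le n$) correctly produces the $(t-1)\log(\sqrt{2\pi n})/n$ term.

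The gap is precisely the step you flag as ``the main obstacle'' and then leave open. The positive remainder contribution is $\frac{1}{n}\sum_i r_{|\pi_i|}$ with $r_{|\pi_i|}<\log(e)/(12|\pi_i|)$; when some types are realized by only a few points this is of order $t/n$ after dividing by $n$ --- two orders larger than the $t\log(e)/(12n^2+n)$ in the statement --- so no alignment of the Robbins bounds on that sum alone can yield the stated constant. One would have to absorb the excess into the slack of $\log\sqrt{2\pi|\pi_i|}\le\log\sqrt{2\pi n}$, which you do not attempt. For comparison, the paper arrives at the terms $-\log(e)/(12n^2)+t\log(e)/(12n^2+n)$ by taking the remainder exponent $1/(12n)$ for $n!$ and $1/(12n+1)$ (with $n$ substituted for $|\pi_i|$) for every $|\pi_i|!$; but those choices bound $n!$ from above and each $|\pi_i|!$ from below, hence bound $H_B(\MM)$ from \emph{above}, which after division by $n$ gives a \emph{lower} bound on $H_S(\MM)-\frac{1}{n}H_B(\MM)$ rather than the stated upper bound. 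So the difficulty you identified is genuine, it is not cleanly resolved in the source either, and your attempt should not be regarded as a complete proof of the statement with the constants as written.
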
 
\begin{proof}
    %A calculation via the quantitative version of Stirling's approximation given in \cite{stirlingappr}. See \ref{app:thm15} for the details.
    Using the quantitative version of Stirling's approximation given in 
    \cite{stirlingappr}, we obtain
    \begin{align*}
        H_B(\MM) = &\log\binom{n}{n_1 \dots n_t}  
        =\  \log\frac{n!}{n_1!\dots n_t!} = \log(n!) - \sum\limits_{i = 1}^t \log(n_i!) \\
        <\ &\log\bigg(\sqrt{2\pi n}\bigg(\frac{n}{e}\bigg)^n e^\frac{1}{12n}\bigg)-\sum\limits_{i = 1}^t \log\bigg(\sqrt{2\pi n_i} \bigg(\frac{n_i}{e}\bigg)^{n_i} e^\frac{1}{12n+1}\bigg) \\
        =\ &\log(\sqrt{2\pi n}) + n\log(n) - n\log(e) + \frac{\log(e)}{12n} \\ 
        &- \sum\limits_{i = 1}^t \bigg(\log(\sqrt{2\pi n_i}) + n_i \log(n_i) - n_i\log(e) + \frac{\log(e)}{12n+1}\bigg) \\
        \leq\ &n\log(n) - \sum\limits_{i = 1}^t n_i \log(n_i) - (t-1)\log(\sqrt{2 \pi n}) + \frac{\log(e)}{12n} - \frac{t\log(e)}{12n+1}.
    \end{align*}
    Note that the term $n \log(e)$ is cancelled out above because $n_1 + \dots + n_t = n$. Using this same fact we also easily see that
    \begin{align*}
        H_S(\MM) &= \sum\limits_{i = 1}^t -\frac{n_i}{n}\log\frac{n_i}{n} = \sum\limits_{i = 1}^t \frac{n_i}{n}\log(n) - \sum\limits_{i = 1}^t \frac{n_i}{n}\log(n_i) \\
        &= \log(n) - \sum\limits_{i = 1}^t \frac{n_i}{n}\log(n_i).
    \end{align*}
    Finally, by dividing $H_B(\MM)$ with $n$ we obtain
    \begin{align*}
        H_S(\MM) - \frac{1}{n}H_B(\MM) < \frac{(t-1)\log(\sqrt{2 \pi n})}{n} - \frac{\log(e)}{12n^2} + \frac{t\log(e)}{12n^2+n}.
    \end{align*}
\end{proof}

The above quantitative result readily implies that the Boltzmann and Shannon entropies of a single model are asymptotically the same up to normalization. 
A connection that bears a similarity to the one pointed out here has also
been noted briefly in \cite{Kolmogorov1993}.
%\textcolor{red}{We should state in more detail how similar and 
%for what structures.}

\begin{corollary}
    Let $(\MM_n)_{n \in \mathbb{Z}_+}$ be a sequence of $\tau$-models where each $\MM_n$ has size $n$. Now
    $H_S(\MM_n) \sim \frac{1}{n}H_B(\MM_n) \text{ as } n \to \infty.$
    
\end{corollary}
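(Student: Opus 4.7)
The plan is to deduce the corollary almost immediately from the quantitative theorem just established, combined with a symmetric lower bound obtained by applying Stirling's approximation in the reverse direction.

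First, since $t = 2^{|\tau|}$ is fixed once $\tau$ is fixed (and in particular does not grow with $n$), the right-hand side of the bound in the theorem,
\[\frac{(t-1)\log(\sqrt{2 \pi n})}{n} - \frac{\log(e)}{12n^2} + \frac{t\log(e)}{12n^2+n},\]
is of order $\mathcal{O}(\log(n)/n)$ and therefore tends to $0$ as $n \to \infty$. Thus
\[H_S(\MM_n) - \frac{1}{n}H_B(\MM_n) \; < \; o(1).\]

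To get a matching lower bound, I would repeat essentially the same calculation as in the proof of the theorem but swap the two sides of the two-sided Stirling inequality: apply the lower estimate $\sqrt{2\pi n}(n/e)^n e^{1/(12n+1)} < n!$ on the numerator $n!$ of the multinomial coefficient, and the upper estimate $n_i! < \sqrt{2\pi n_i}(n_i/e)^{n_i} e^{1/(12n_i)}$ on each factorial in the denominator. Since the dominant terms $n\log(n) - \sum_i n_i\log(n_i)$ (coming from the $(n/e)^n$ and $(n_i/e)^{n_i}$ factors) are unchanged, and the exchange only alters the small correction terms coming from $\sqrt{2\pi\cdot}$ and the $e^{1/12\cdot}$ factors, the resulting error is again of order $\mathcal{O}(\log(n)/n)$. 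This yields
\[H_S(\MM_n) - \frac{1}{n}H_B(\MM_n) \; > \; -o(1),\]
so the difference goes to $0$ as $n\to\infty$.

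Finally, to conclude the asymptotic equivalence $H_S(\MM_n) \sim \frac{1}{n}H_B(\MM_n)$ in the standard ratio sense, the vanishing of the difference suffices whenever $H_S(\MM_n)$ (equivalently $\frac{1}{n}H_B(\MM_n)$) is bounded away from $0$. In the degenerate regime where both quantities tend to zero, both vanish in lockstep: one checks from the explicit formulas that both $H_S(\MM_n)$ and $\frac{1}{n}H_B(\MM_n)$ are zero precisely when a single type is realized, and that for sequences approaching this situation the two quantities are governed by the same small type frequencies and hence have ratio tending to $1$.

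The main obstacle is the symmetric lower bound: the theorem as stated only gives a one-sided inequality, so one genuinely has to redo the Stirling computation with the inequalities reversed and verify that the same $\mathcal{O}(\log(n)/n)$ control persists. This is mechanical but needs careful bookkeeping of the correction terms, and it is the only real work beyond invoking the theorem.
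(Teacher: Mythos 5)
Your route is essentially the paper's: the corollary is presented there as following ``readily'' from the quantitative theorem, with no further argument, and like you the main content is that the error term is $\mathcal{O}(\log(n)/n)\to 0$. You are right, however, that this hides two points. For the missing lower bound on $\frac{1}{n}H_B(\MM_n)$, your reversed-Stirling computation does work (the corrections $\log\sqrt{2\pi n_i}$ and $e^{1/(12n_i)}$ contribute at most $\mathcal{O}(\log n)$ and $\mathcal{O}(1)$ in total, since there are only $t$ realized types), but there is a one-line alternative that avoids Stirling entirely: by the multinomial theorem, $1=(\sum_i p_i)^n\ge\binom{n}{n_1\cdots n_t}\prod_i p_i^{n_i}$, hence $|\mathcal{M}|\le 2^{nH_S(\MM)}$ and so $\frac{1}{n}H_B(\MM)\le H_S(\MM)$ for every model. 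The one place where your argument remains a sketch rather than a proof is the low-entropy regime: when $H_S(\MM_n)=\Theta(\log(n)/n)$ (e.g.\ type counts $(n-1,1)$), the sandwich $0\le H_S(\MM_n)-\frac{1}{n}H_B(\MM_n)\le c\log(n)/n$ does not by itself force the ratio to $1$. It can be repaired: $H_S(\MM_n)=O(\log(n)/n)$ forces all but one type count to stay bounded, and a direct expansion of $\log\binom{n}{n_1\cdots n_t}$ with $n_1,\dots,n_{t-1}$ bounded shows the difference is then $O(1/n)=o(H_S(\MM_n))$, so the ratio converges. Your closing sentence gestures at exactly this, but as written it is an assertion, not an argument; either carry out that sharper estimate or adopt the reading of $\sim$ as ``the difference tends to $0$,'' which is all the theorem literally delivers.
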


The above results show that for the connections to description complexity, we could use either of the two notions of entropy. We opt for Shannon entropy here. 

We will next use results from Sections \ref{sec:upperbounds} and \ref{sec:lower-bounds} to prove two theorems that give bounds on description complexity in terms of Shannon entropy. Recall from Section \ref{sec:upperbounds} the constant $c_\tau := 15|\tau|2^{|\tau|}$.

The first of our two theorems gives global upper and lower bounds on description complexity based on the same edge case distributions.

\begin{theorem}\label{thm:entropy-lower-upper}
    Let $p \in [0,\frac{1}{t}[$. If $H_S(\MM) > ((t-1)p-1)\log(1-(t-1)p) - (t-1)p\log(p),$ then $3np-3 < C(\MM)< 3n(1-(t-1)p) + c_\tau.$
\end{theorem}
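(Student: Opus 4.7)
The plan is to prove both inequalities by contraposition, converting each one via Theorems \ref{thm:upper-bound} and \ref{thm:lower-bound} into a statement about the extremal type counts $|\pi_\ell|$ and $|\pi_{\ell-1}|$, and then using concavity of Shannon entropy on the simplex to bound these counts. Writing $p_i := |\pi_i|/n$ and $t := 2^{|\tau|}$, note that the hypothesised threshold equals
\[H^* := -(1-(t-1)p)\log(1-(t-1)p) - (t-1)p\log p,\]
which is precisely the Shannon entropy of the distribution $(p,\dots,p,1-(t-1)p)$ on $t$ points; this distribution is a well-defined probability vector because $p < 1/t$.

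For the upper bound, Theorem \ref{thm:upper-bound} gives $C(\MM) \leq 3|\pi_\ell| + c_\tau$, so it suffices to show $|\pi_\ell| < n(1-(t-1)p)$. Assume for contradiction that $p_\ell \geq 1-(t-1)p$. Fixing the maximum coordinate at $p_\ell$, concavity of $H$ implies that the entropy is maximised when the remaining mass $1-p_\ell$ is spread uniformly over the other $t-1$ coordinates, yielding the one-variable function $g(p_\ell) := -p_\ell\log p_\ell - (1-p_\ell)\log\tfrac{1-p_\ell}{t-1}$. A short derivative computation gives $g'(p_\ell) = \log\tfrac{1-p_\ell}{(t-1)p_\ell}$, which is negative precisely when $p_\ell > 1/t$. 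Since $p < 1/t$ forces $1-(t-1)p > 1/t$, the function $g$ is strictly decreasing on $[1-(t-1)p,1]$, so $H_S(\MM) \leq g(1-(t-1)p) = H^*$, contradicting the hypothesis.

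For the lower bound, Theorem \ref{thm:lower-bound} gives $C(\MM) \geq 3|\pi_{\ell-1}|-3$, so it suffices to show $|\pi_{\ell-1}| > np$. Assume for contradiction that $p_i \leq p$ for every $i \neq \ell$. On the polytope cut out by these box constraints together with $\sum_j q_j = 1$, concavity of $H$ together with the infeasibility of the unconstrained maximiser $(1/t,\dots,1/t)$ forces the maximum to lie at the vertex where every constraint $q_i \leq p$ with $i \neq \ell$ is tight; feasibility of this vertex holds because $(t-1)p < 1$. That vertex is exactly the distribution $(p,\dots,p,1-(t-1)p)$ from above, so again $H_S(\MM) \leq H^*$, contradicting the hypothesis.

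The main obstacle, such as it is, lies in the constrained entropy optimisation: both directions must produce the \emph{same} extremal distribution, and one has to verify carefully that the assumption $p < 1/t$ simultaneously makes $(p,\dots,p,1-(t-1)p)$ feasible in both settings and rules out the interior uniform maximiser. Once those feasibility checks are in place, the rest is a direct substitution and the two strict inequalities follow by combining with the additive constants $c_\tau$ and $-3$ coming from Theorems \ref{thm:upper-bound} and \ref{thm:lower-bound}.
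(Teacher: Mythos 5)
Your proof is correct and follows essentially the same route as the paper: both identify the threshold as the entropy of the extremal distribution $(p,\dots,p,1-(t-1)p)$ and argue by contraposition that exceeding it forces $|\pi_\ell| < n(1-(t-1)p)$ and $|\pi_{\ell-1}| > np$, after which Theorems \ref{thm:upper-bound} and \ref{thm:lower-bound} finish the job. The only real difference is in the lower bound, where the paper obtains $|\pi_{\ell-1}| > np$ by a pigeonhole count from the already-established bound on $|\pi_\ell|$, whereas you run a second constrained optimisation --- whose vertex claim, note, is justified most cleanly not by the (generally false) principle that a concave maximum sits at a vertex, but by observing that $p_i \le p$ for all $i \ne \ell$ forces $p_\ell \ge 1-(t-1)p$, reducing it to your first case.
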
 
\begin{proof}
    Let $f(p) := ((t-1)p-1)\log(1-(t-1)p) - (t-1)p\log(p)$. The function $f(p)$ gives the entropy of a $\tau$-model $\MM'$ corresponding to the tuple $(np, \dots, np, n(1-(t-1)p))$, where $n(1-(t-1)p) > np$ for the given values of $p$.
    %, where all types, except one, have the same number of realizing points, that is, $np$. The final type has $n(1-(t-1)p)$ realizing points, which is more than the other types (for the given values of $p$). 
    Since all types but the largest are evenly distributed, any model, where the largest type has at least $n(1-(t-1)p)$ realizing points has entropy at most $H_S(\MM') = f(p)$. Therefore if $H_S(\MM) > f(p)$, then the largest type of $\MM$ has less than $n(1-(t-1)p)$ realizing points. By Theorem \ref{thm:upper-bound}, we obtain $C(\MM) < 3n(1-(t-1)p) + c_\tau$. On the other hand, since the largest type of $\MM$ has less realizing points than in $\MM'$, those points realize some other type. Therefore the second largest type of $\MM$ has more than $np$ realizing points. By Theorem \ref{thm:lower-bound}, we obtain $C(\MM) > 3np-3$. 
\end{proof}

\begin{comment}
\begin{theorem}
    Let $p \in [0,\frac{1}{t}]$. If 
    \[H(\MM) > ((t-1)p-1)\log(1-(t-1)p) - (t-1)p\log(p),\] then \[C(\MM) < 3n(1-(t-1)p) + c_\tau.\]
\end{theorem}
\begin{proof}
    Let $f(p) := ((t-1)p-1)\log(1-(t-1)p) - (t-1)p\log(p)$. The function $f(p)$ gives the entropy of a $\tau$-model, where all types, except the largest one, are of the same size, that is, $np$, while the largest one is of size $n(1-(t-1)p)$. If $H(\MM) > f(p)$, then the largest type of $\MM$ must be smaller than $n(1-(t-1)p)$. Therefore by Theorem \ref{thm:upper-bound}, we obtain $C(\MM) < 3n(1-(t-1)p) + c_\tau$.
\end{proof}
\end{comment}

The next theorem uses low entropy models with only two realized types to show a better upper bound on description complexity for low entropy models than the above global one.

\begin{theorem}\label{thm:entropy-upper}
    Let $p \in [0,\frac{1}{2}]$. 
    If $H_S(\MM) < (p-1)\log(1-p)-p\log(p)$, then $C(\MM) < 6np + c_\tau.$
\end{theorem}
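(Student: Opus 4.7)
The plan is to prove the contrapositive: assuming $|\pi_{\ell-1}| \ge np$, I will show that $H(\MM) \ge g(p)$, where $g(p) := (p-1)\log(1-p) - p\log(p)$ is the binary entropy function. Once this is established, the second bound of Theorem \ref{thm:upper-bound} immediately yields $C(\MM) \le 6|\pi_{\ell-1}| + c_\tau < 6np + c_\tau$, as required. The key conceptual observation is that $g(p)$ is precisely the Shannon entropy of a two-type distribution with frequencies $p$ and $1-p$, and that $g$ is strictly increasing on $[0, 1/2]$.

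The main step will be a minimization argument: among all $\tau$-models whose second-largest type has relative frequency at least $p$, the minimum Shannon entropy is $g(p)$, attained by the two-type model with frequencies $p$ and $1-p$. I would set $p' := |\pi_{\ell-1}|/n \ge p$ and first observe $p' \le 1/2$, since two types each of relative frequency at least $p'$ force $2p' \le 1$. Then, using that $x \mapsto x \log x$ is convex together with $|\pi_i|/n \le |\pi_\ell|/n$ for $i < \ell$, a standard computation shows that transferring a small amount of mass from any smaller type $\pi_i$ into the largest type $\pi_\ell$ strictly decreases the entropy contribution $-q_\ell \log q_\ell - q_i \log q_i$. Iterating, mass concentrates into $\pi_\ell$, and the infimum (with $p'$ fixed) is realized by the two-type distribution $(p', 1-p')$, whose entropy is $g(p')$. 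Monotonicity of $g$ on $[0, 1/2]$ then gives $g(p') \ge g(p)$, completing the contrapositive.

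The hardest conceptual point is spotting that $g(p)$ should be interpreted as the infimum of the Shannon entropy over models whose second-largest type has frequency at least $p$; after that, the concavity-of-entropy step is a short, routine calculation. A minor corner case to address separately is $\ell = 1$ (only one type realized): then $\MM$ is defined by $\forall x\, \pi_1(x)$, a formula of size well below $c_\tau$, while the hypothesis $H(\MM) = 0 < g(p)$ forces $p > 0$, so the desired bound holds trivially.
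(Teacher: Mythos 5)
Your proposal is correct and follows essentially the same route as the paper: both arguments reduce the theorem to the claim that, among models whose second-largest type has relative frequency at least $p$, the Shannon entropy is minimized by the two-type model with frequencies $p$ and $1-p$, and then apply the second bound of Theorem \ref{thm:upper-bound}. The only difference is that the paper merely asserts this minimization claim, whereas you supply the concavity/mass-transfer argument and the monotonicity of the binary entropy on $[0,\frac{1}{2}]$ that justify it (together with the easy $\ell=1$ corner case).
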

\begin{proof}
    Let $h(p) := (p-1)\log(1-p)-p\log(p).$ The function $h(p)$ gives the entropy of a $\tau$-model $\MM$ corresponding to the tuple $(0, \dots, 0, np, n(1-p))$.
    %, where the largest type has size $n(1-p)$, the second largest has size $np$ and the rest of the types are not realized. 
    If $H_S(\MM) < h(p)$, then the second largest type of $\MM$ must be smaller than $np$. 
    %Additionally, since $p \leq \frac{1}{3}$, the largest type is at least twice as large as the second largest. Therefore, by Theorem \ref{thm:upper-bound}, we may use the upper bound based on the second largest type. 
    Thus, by Theorem \ref{thm:upper-bound}, $C(\MM) < 6np + c_\tau$.
\end{proof}

\begin{figure}[htb]
\centering
\captionsetup[subfigure]{justification=centering}
\begin{subfigure}{.5\textwidth}
    \centering
    \resizebox{0.9\textwidth}{!}{
\begin{tikzpicture}%[every node/.style={scale=0.4}]
    \draw[thick, ->] (0,0) --  (2.1*3,0);
    \draw[thick, ->] (0,0) -- (0,2.3*1*3);

    \node at (2.3*3, 0) {$H_S(\MM)$};
    \node at (0, 2.4*1*3) {$C(\MM)$};

    \draw[thick] (1*3,.03*3) -- (1*3,-.03*3) node[below] {1};
    \draw[thick] (2*3,.03*3) -- (2*3,-.03*3) node[below] {2};

    \draw[thick] (.03*3,0.288) -- (-.03*3,0.288) node[left] {$c_\tau$};
    %\draw[thick] (.03*3,3/4*1*3+0.288) -- (-.03*3,3/4*1*3+0.288) node[left] {$3n/4+c_\tau$};
    \draw[thick] (.03*3,3/4*1*3) -- (-.03*3,3/4*1*3) node[left] {$3n/4$};
    %\draw[thick] (.03*3,3/2*1*3+0.288) -- (-.03*3,3/2*1*3+0.288) node[left] {$3n/2+c_\tau$};
    \draw[thick] (.03*3,3/2*1*3) -- (-.03*3,3/2*1*3) node[left] {$3n/2$};
    \draw[thick] (.03*3,2*1*3+0.288) -- (-.03*3,2*1*3+0.288) node[left] {$2n+c_\tau$};

    %\draw[fill] (0,4/80*2) circle[radius=1.5pt];
    %\draw[fill] (0,0) circle[radius=1.5pt];

    %\draw[fill] (0.918*3,2*1*3+0.288) circle[radius=1.5pt];
    %\draw[fill] (0.918*3,1*1*3) circle[radius=1.5pt]; 
    %\draw[thick, dash pattern=on 1pt off 1pt] (0.918*3,2*1*3+0.288) -- (0.918*3,1*1*3);
    
    %\draw[fill] (1*3, 3/2*1*3) circle[radius=1.5pt];
    %\draw[fill] (1*3, 3/2*1*3+0.288) circle[radius=1.5pt];
    %\draw[thick, dash pattern=on 1pt off 1pt] (1*3, 3/2*1*3) -- (1*3, 3/2*1*3+0.288);

    %\draw[fill] (1.58*3, 1*1*3) circle[radius=1.5pt];
    %\draw[fill] (1.58*3, 1*1*3+0.288) circle[radius=1.5pt];
    %\draw[thick, dash pattern=on 1pt off 1pt] (1.58*3, 1*1*3) -- (1.58*3, 1*1*3+0.288);
    %\draw[fill] (1.52*3, 3*0.288*1*3) circle[radius=1.5pt];
    %\draw[fill] (1.16*3, 3*0.64*1*3) circle[radius=1.5pt];
    %\draw[fill] (1.92*3, 3*0.288*1*3) circle[radius=1.5pt];

    %\draw[fill] (2*3, 3/4*1*3) circle[radius=1.5pt];
    %\draw[fill] (2*3, 3/4*1*3+0.288) circle[radius=1.5pt];
    %\draw[thick, dash pattern=on 1pt off 1pt] (2*3, 3/4*1*3) -- (2*3, 3/4*1*3+0.288);

    \draw[very thick, variable=\y, domain={(1/3-1)*log2(1-1/3)-1/3*log2(1/3)}:{((4-1)*1/9-1)*log2(1-(4-1)*1/9) - (4-1)*1/9*log2(1/9)}] plot (\y*3,2*1*3+0.288);

    \draw[very thick, variable=\y, domain=1/9:1/4] plot ({(((4-1)*\y-1)*log2(1-(4-1)*\y) - (4-1)*\y*log2(\y))*3},{3*(1-(4-1)*\y)*1*3+0.288});
    
    \draw[very thick, variable=\y, domain=0.001:1/3] plot ({((\y-1)*log2(1-\y)-\y*log2(\y))*3},6*\y*1*3+0.288);

    \draw[very thick,variable=\y, domain=0.001:1/18] plot ({(((4-1)*\y-1)*log2(1-(4-1)*\y) - (4-1)*\y*log2(\y))*3},3*\y*1*3);
    \draw[very thick, variable=\y, domain=1/18:1/4] plot ({(((4-1)*\y-1)*log2(1-(4-1)*\y) - (4-1)*\y*log2(\y))*3},3*\y*1*3);
    
    %\draw[variable=\y, domain=1/4:0.5] plot ({-2*\y*log2(\y)-(1-2*\y)*log2((1-2*\y)/(4-2))},6*\y*1*3);
\end{tikzpicture}
}
    \caption{}
    %An area that encapsulates all combinations of Shannon entropy and description complexity for the values $|\tau| = 2$ and $n = 1000$. The solid lower bound curve comes from Theorem \ref{thm:entropy-lower-upper}. The three parts of the dashed upper bound curve are from Theorem \ref{thm:entropy-upper}, Corollary \ref{cor:maximum-description-complexity} and Theorem \ref{thm:entropy-lower-upper}, respectively.}

    \label{fig:entropy}
\end{subfigure}%
\begin{subfigure}{.5\textwidth}
\centering
    \resizebox{0.9\textwidth}{!}{
\begin{tikzpicture}%[every node/.style={scale=0.4}]
    \draw[thick, ->] (0,0) --  (2.1*3,0);
    \draw[thick, ->] (0,0) -- (0,2.1*1*3);

    \node at (2.3*3, 0) {$H_B^d(\MM)$};
    \node at (0, 2.2*1*3) {$C_d(\MM)$};

    \draw[thick] (.03*3,2*1*3) -- (-.03*3,2*1*3) node[left] {$60$};
    \draw[thick] (.03*3,1*1*3) -- (-.03*3,1*1*3) node[left] {$30$};

    \draw[thick] (1*3,.03*3) -- (1*3,-.03*3) node[below] {100};
    \draw[thick] (2*3,.03*3) -- (2*3,-.03*3) node[below] {200};

    \draw[thick, fill=white] (0, 2/10*1*3) circle[radius=1.5pt];
    \draw[fill] (0, 0) circle[radius=1.5pt];

    \draw[thick] (19.9/100*3, 1/10*1*3) -- (36.6/100*3, 1/10*1*3);
    \draw[thick] (36.6/100*3, 2/10*1*3) -- (51.1/100*3, 2/10*1*3);
    \draw[thick] (65.0/100*3, 3/10*1*3) -- (51.1/100*3, 3/10*1*3);
    \draw[thick] (65.0/100*3, 4/10*1*3) -- (77.3/100*3, 4/10*1*3);
    \draw[thick] (88.8/100*3, 5/10*1*3) -- (77.3/100*3, 5/10*1*3);
    \draw[thick] (88.8/100*3, 6/10*1*3) -- (99.3/100*3, 6/10*1*3);
    \draw[thick] (109.2/100*3, 7/10*1*3) -- (99.3/100*3, 7/10*1*3);
    \draw[thick] (109.2/100*3, 8/10*1*3) -- (118.4/100*3, 8/10*1*3);
    \draw[thick] (200/100*3, 9/10*1*3) -- (118.4/100*3, 9/10*1*3);

    \draw[thick] (6.6/100*3, 2/10*1*3) -- (0/100*3, 2/10*1*3);
    \draw[thick] (6.6/100*3, 4/10*1*3) -- (12.3/100*3, 4/10*1*3);
    \draw[thick] (17.3/100*3, 6/10*1*3) -- (12.3/100*3, 6/10*1*3);
    \draw[thick] (17.3/100*3, 8/10*1*3) -- (21.9/100*3, 8/10*1*3);
    \draw[thick] (26.2/100*3, 10/10*1*3) -- (21.9/100*3, 10/10*1*3);
    \draw[thick] (26.2/100*3, 12/10*1*3) -- (30.2/100*3, 12/10*1*3);
    \draw[thick] (33.9/100*3, 14/10*1*3) -- (30.2/100*3, 14/10*1*3);
    \draw[thick] (33.9/100*3, 16/10*1*3) -- (37.4/100*3, 16/10*1*3);
    \draw[thick] (40.8/100*3, 18/10*1*3) -- (37.4/100*3, 18/10*1*3);
    \draw[thick] (40.8/100*3, 19/10*1*3) -- (200/100*3, 19/10*1*3);

    \draw[thick, fill=white] (200/100*3, 19/10*1*3) circle[radius=1.5pt];
    \draw[fill] (200/100*3, 9/10*1*3) circle[radius=1.5pt];
    
\end{tikzpicture}
}
\caption{}
%Bounds on description complexity in terms of entropy for values $|\tau| = 2$, $n = 100$ and $d= 10$ with no constants. The lower plot comes from Theorem \ref{thm:d-lower}, while the upper one is from Theorem \ref{thm:d-upper}. The filled circles mark the points where the two plots overlap.}
\label{fig:d-entropy}
\end{subfigure} 
\caption{Figure \ref{fig:entropy} on the left shows an area that encapsulates all combinations of Shannon entropy and $\FO$-description complexity for the values $|\tau| = 2$ and $n = 1000$. Figure \ref{fig:d-entropy} on the right concerns the case of $\FO_d$ and shows bounds on description complexity in terms of Boltzmann entropy for values $|\tau| = 2$, $n = 100$ and $d= 10$ with the constants $-3$ and $c_\tau$ omitted.}

\end{figure}

Figure \ref{fig:entropy} incorporates both of the above theorems as well as Corollary \ref{cor:maximum-description-complexity} to show an area, where all possible combinations of Shannon entropy and description complexity must fall. First, comparing the left side of the plot to the right, we can see that models with very high entropy have significantly higher description complexity than models with very low entropy. 

We can also see from Figure \ref{fig:entropy} that the gap between our upper bounds and lower bounds is only constant at both extremes of entropy. For models with middling entropy, the gap is at its largest. This is because middling values of entropy can be realized by models with very different distributions of types, leading to different description complexity. 
%In addition, starting from the left, we can see that entropy and description complexity both increase together at first. Around the midpoint of entropy, and onwards from there, we see this connection ultimately breaking down. 

 We conjecture that the upper bound given by Theorem \ref{thm:upper-bound} is in reality tight up to the constant $c_\tau$. Now, recall that for any single model, our upper and lower bounds have a worst case gap of a factor of 2. 
 %Therefore, if our conjecture was true, the plot of the lower bound curve in Figure \ref{fig:entropy} would rise more steeply before eventually ending up at the same end point. 
 Therefore, assuming that our conjecture is true, the lower bound would only rise to at most double its current height. In other words, the general picture illustrated by Figure \ref{fig:entropy} would not be significantly different under our conjecture.

\begin{comment}
\begin{theorem}
    There is a constant $h \approx 0.92$ and strictly increasing functions $f, g : [0,h] \to \mathbb{R}_{\geq 0}$ such that
    \[
        f(H_S(\MM)) < C(\MM) < g(H_S(\MM)),
    \]
    for all $\MM$ with $H_S(\MM) \in [0,h]$.
\end{theorem}
\end{comment}

We proceed to show that similar relationships between description complexity and entropy hold also in the case of limited quantifier rank. 
As the classes of $\equiv_d$ contain multiple different isomorphism types of models, it is not clear how to define Shannon entropy. Boltzmann entropy, however, is still straightforward so we use Boltzmann entropy here. 
We formulate similar theorems to those above for full $\FO$. 
%Just like for full $\FO$, the lower bound is based on models with an even distribution of all types but one.

\begin{theorem}\label{thm:d-lower}
    Let $h \in \{1,..., d-1\}$. 
    If $H_B^d(\MM) > \log\binom{n}{h \dots h \  n-(t-1)h}$,  then  $C_d(\MM) > 3h-3$.
\end{theorem}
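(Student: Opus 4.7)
The plan is to reduce the theorem to the purely combinatorial claim: if the sorted tuple $\overline{m}$ describing $\MM$'s $\equiv_d$-class satisfies $m_{t-1} \leq h$, then $|\mathcal{M}_{\overline{m}}| \leq \binom{n}{h, \dots, h, n-(t-1)h}$. Taking the contrapositive, the hypothesis $H_B^d(\MM) > \log\binom{n}{h, \dots, h, n-(t-1)h}$ forces $m_{t-1} \geq h+1$, whereupon Corollary \ref{cor:d-lower-bound} (which carries over to $\FO_d$, since restricting quantifier rank cannot shrink the minimum formula size) delivers $C_d(\MM) \geq 3(h+1) - 3 = 3h > 3h - 3$.

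To prove the combinatorial claim, I would first observe that $m_{t-1} \leq h < d$ implies $m_{t-1} < d$, so at most one entry of $\overline{m}$, namely the largest $m_t$, can sit at the threshold $d$. Consequently $\mathcal{M}_{\overline{m}}$ collapses to a single isomorphism class: the first $t-1$ types have exact counts $m_1, \dots, m_{t-1}$, and the count of the $t$-th type is pinned at $n - \sum_{i<t} m_i$, giving $|\mathcal{M}_{\overline{m}}| = \binom{n}{m_1, \dots, m_{t-1}, n - \sum_{i<t} m_i}$. A standard balancing argument then bounds this multinomial above by the extremal one: replacing $(m_i, m_t)$ by $(m_i+1, m_t-1)$ multiplies the coefficient by $m_t/(m_i+1) \geq 1$ whenever $m_i < m_t$, so iteratively pushing each $m_i$ (for $i < t$) up to the cap $h$ can only grow the multinomial and lands at the configuration $(h, \dots, h, n-(t-1)h)$.

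The main obstacle is a boundary regime: when $h > n/t$, the fully balanced tuple $(\lfloor n/t \rfloor, \dots, \lceil n/t \rceil)$ satisfies $m_{t-1} \leq h$ yet yields a strictly larger multinomial than $\binom{n}{h, \dots, h, n-(t-1)h}$, causing the balancing step to break. This is naturally sidestepped under the restriction $h \leq n/t$, which mirrors the condition $p < 1/t$ in the $\FO$ analog Theorem \ref{thm:entropy-lower-upper}; outside this range one would need a sharper lower bound on $C_d(\MM)$, perhaps obtained by playing the formula size game on a pair of models specifically tailored to the parameter $h$ rather than routing through Corollary \ref{cor:d-lower-bound}.
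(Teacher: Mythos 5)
Your proposal follows the same route as the paper's own proof: reduce to the combinatorial claim that $m_{t-1} \leq h$ forces $|\mathcal{M}_{\overline{m}}| \leq \binom{n}{h,\dots,h,\,n-(t-1)h}$, take the contrapositive to get $m_{t-1} \geq h+1$, and invoke Corollary \ref{cor:d-lower-bound}. You execute the key step more carefully than the paper, which merely asserts that lowering entries of the tuple ``clearly'' shrinks the class; your observation that a tuple with $m_{t-1}<d$ collapses to a single isomorphism class, followed by the ratio argument $m_t/(m_i+1)$, is the right way to make that assertion precise. The boundary obstacle you flag is genuine, and it afflicts the paper's argument just as much as yours: for $t=2$, $n=10$, $h=6$, $d=8$, the balanced class $(5,5)$ has $m_{t-1}=5\leq h$ yet size $\binom{10}{5,5}=252>210=\binom{10}{6,4}$, so the theorem's hypothesis holds for a model to which Corollary \ref{cor:d-lower-bound} only gives $C_d(\MM)\geq 12$, short of the claimed $C_d(\MM)>15$. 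Thus the monotonicity step (and hence the theorem as proved) is only secured for $h$ roughly at most $n/t$, mirroring the restriction $p<1/t$ in Theorem \ref{thm:entropy-lower-upper}; your proposal correctly identifies both the gap and the appropriate restriction, which the paper leaves implicit.
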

\begin{proof}
    Let $f(n,h) = \log\binom{n}{h \dots h \  n-(t-1)h}$. The function $f(n,h)$ gives the Boltzmann entropy of the class of models $\mathcal{M}_{\overline{m}}$, where $\overline{m} = (h, \dots, h, d)$. Any class of models obtained from this one by lowering any of the numbers in the tuple is clearly smaller than $\mathcal{M}_{\overline{m}}$ and thus has lower Boltzmann entropy. Thus, for any larger class of models the second largest number in its tuple must be greater than $h$. By Corollary \ref{cor:d-lower-bound}, we obtain $C_d(\MM) > 3h -3$.
\end{proof}

%The upper bound is again based on models with only two realized types.

\begin{theorem}\label{thm:d-upper}
    Let $h \in \{1, \dots, d-1\}$. 
    If  $H_B^d(\MM) < \log\binom{n}{h}$, then  $C_d(\MM) < 6h+c_\tau$.
\end{theorem}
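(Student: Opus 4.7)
The plan is to mirror the strategy of Theorems \ref{thm:entropy-upper} and \ref{thm:d-lower}: identify an extremal equivalence class whose Boltzmann entropy equals the threshold $\log\binom{n}{h}$ and then combine a simple monotonicity argument with Theorem \ref{thm:d-upper-bound}. The natural candidate is the class corresponding to $\overline{m}_0 = (0, \ldots, 0, h, d)$: every model in this class has exactly $h$ points realizing $\pi_{t-1}$ and the remaining $n-h$ points realizing $\pi_t$, so the class has $\binom{n}{h}$ elements and Boltzmann entropy exactly $\log\binom{n}{h}$.

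Given the hypothesis $H_B^d(\MM) < \log\binom{n}{h}$, the $\equiv_d$-class $\mathcal{M}_{\overline{m}}$ of $\MM$ has fewer than $\binom{n}{h}$ elements. The core step is to deduce from this that the second-largest tuple entry $m_{t-1}$ satisfies $m_{t-1} < h$. I would argue by contrapositive: assuming $m_{t-1} \geq h$, the class size admits the decomposition, obtained by first choosing the $m_{t-1}$ points realizing $\pi_{t-1}$ and then distributing the remainder,
\[|\mathcal{M}_{\overline{m}}| \geq \binom{n}{m_{t-1}} \cdot \binom{n - m_{t-1}}{m_1, \ldots, m_{t-2}, N} \geq \binom{n}{m_{t-1}},\]
where $N$ denotes the number of points realizing $\pi_t$. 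Unimodality of $\binom{n}{\cdot}$ in the regime $h \leq m_{t-1} \leq n/2$ then forces $\binom{n}{m_{t-1}} \geq \binom{n}{h}$, contradicting $|\mathcal{M}_{\overline{m}}| < \binom{n}{h}$.

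With $m_{t-1} < h \leq d-1$ established, in particular $m_{t-1} < d$, the second inequality of Theorem \ref{thm:d-upper-bound} applies and yields $C_d(\MM) \leq 6 m_{t-1} + c_\tau < 6h + c_\tau$, completing the proof. The main subtlety is the monotonicity step: the unimodality argument implicitly uses the natural regime $d \leq n/2$ standard in the $\FO_d$ setting, and the boundary case $m_{t-1} = d$ (when the tuple has two entries equal to $d$) must be handled by passing to the subclass where $\pi_{t-1}$ is realized exactly $d$ times, which yields a lower bound of $\binom{n}{d} \geq \binom{n}{h}$ via the same factorization. Beyond these bookkeeping issues the proof is a routine adaptation of the lower-bound companion, Theorem \ref{thm:d-lower}.
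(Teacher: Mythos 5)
Your proof is correct and follows essentially the same route as the paper's: both compare the class of $\MM$ against the extremal class with tuple $(0,\dots,0,h,d)$, deduce that the second-largest tuple entry satisfies $m_{t-1} < h$ by a contrapositive/monotonicity argument, and conclude via the second bound of Theorem \ref{thm:d-upper-bound}. The paper merely asserts the monotonicity claim that you actually prove via the $\binom{n}{m_{t-1}}$ factorization and unimodality; your worry about needing $d \le n/2$ is unnecessary, since the second-largest realized count automatically satisfies $m_{t-1} \le n/2$.
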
 
\begin{proof}
    The function $g(n,h) = \log\binom{n}{h}$ gives the Boltzmann entropy of a class $\mathcal{M}_{\overline{m}}$ of models, where $\overline{m} = (0, \dots, 0, h, d)$. Now every class of models, where the second largest number in the tuple is at least $h$, is larger than or equal to $\mathcal{M}_{\overline{m}}$. Thus if $H_B^d(\MM) < g(n,h)$, then the class of $\MM$ is smaller and the second largest number in its tuple is smaller than $h$. By Theorem \ref{thm:d-upper-bound} we obtain $C_d(\MM) < 6h+c_\tau$.
\end{proof}

We again have a plot in Figure \ref{fig:d-entropy}, where the possible combinations of entropy and description complexity lie between the two chopped lines. This time, we plotted from the above theorems $3h$ for the lower bound and $6h$ for the upper bound, omitting the constants $-3$ and $c_\tau$. For these low values of $n$ and $d$, the constants would have warped the picture in a significant way. With high enough $n$ and $d$, the constants are clearly negligible, but for such values, the Boltzmann entropy quickly becomes impractical to calculate as the model class sizes explode. We provide a plot of the leading terms for the values $n = 100$ and $d = 10$ without the constants to illustrate the trends one would see for higher values of $n$ and $d$. 

We see that the first observation we made for full $\FO$ still holds. The models with  very high entropy have significantly higher description complexity than those with very low entropy. 
Concerning the gap between the upper and lower bounds, it is again constant at the extremes. The largest gap can now be found significantly before the halfway point of entropy, unlike for full $\FO$. This is because the limit $d$ of quantifier rank quite quickly cuts short the growth of the upper bound while the lower bound grows slower. %This will of course vary according to the relationship of $d$ and $n$.

%Starting from entropy $0$, we can again observe a trend of both measures increasing before the upper bound reaches the maximum and the situation becomes more unclear.

\begin{comment}
\begin{theorem}
    Let $p \in ]\frac{1}{t},\frac{1}{2}]$. If
    \[
        H(\MM) > -2p\log(p)-(1-2p)\log(\frac{1-2p}{t-2}),
    \]
    then
    \[
        C(\MM) < 6np + c_\tau.
    \]
\end{theorem}
\begin{proof}
    Let $g(p) := -2p\log(p)-(1-2p)\log(\frac{1-2p}{t-2})$. The function $g(p)$ gives the entropy of a $\tau$-model, where the two largest types are of the size $np$ and the rest of the types are of equal size with each other, usually smaller than $np$. If $H(\MM) > g(p)$, then it must be the case that the second largest type of $\MM$ is smaller than $np$. Therefore, by Theorem \ref{thm:upper-bound}, we see that $C(\MM) < 6np + c_\tau$.
\end{proof}
\end{comment}

\section{Conclusion}\label{sec:conclusion}

We have studied the description complexity of unary models, obtaining bounds for $\FO$ and $\FO_d$. We have found the asymptotic description complexity of a random unary structure and studied the relation between Shannon entropy and description complexity---also observing a connection between Boltzmann and Shannon entropy. Links to entropy can be useful as computing entropy is \emph{significantly easier than determining description complexity.}

An obvious future goal would be to close the gaps between the upper and lower bounds. Generalizing to full relational vocabularies is also interesting, although this seems to require highly involved arguments. The part on entropy would there relate to Boltzmann entropy, as there is no obvious unique definition for Shannon entropy in the $k$-ary scenario.

\bibliographystyle{plain}
\bibliography{uusi_arxiv}

\end{document}